\newtheorem{theorem}{Theorem}[section]
\newtheorem{corollary}[theorem]{Corollary}
\newtheorem{proposition}[theorem]{Proposition}
\theoremstyle{definition}
\newtheorem{definition}[theorem]{Definition}
\newtheorem{remark}[theorem]{Remark}
\newtheorem{example}[theorem]{Example}
\newcommand{\M}{M}
\newcommand{\A}{\mathcal{A}}
\newcommand{\F}{\mathcal{F}}
\newcommand{\N}{\mathbb{N}}
\newcommand{\R}{\mathbb{R}}
\newcommand{\OO}{\mathcal{O}}
\newcommand{\U}{\tilde{U}}
\newcommand{\X}{\tilde{X}}
\newcommand{\Z}{\tilde{Z}}
\newcommand{\nablaa}{\tilde{\nabla}}
\newcommand{\parr}{\slash \! \slash}
\newcommand{\tildeparr}{\skew8\widetilde{\slash\!\slash}^{\phantom{.}}}
\newcommand{\tildeparrinv}{\skew8\tilde{\slash\!\slash}^{-1}}
\DeclareMathOperator{\Ric}{Ric}
\DeclareMathOperator{\Hess}{Hess}
\DeclareMathOperator{\m}{m}
\DeclareMathOperator{\Ad}{Ad}
\DeclareMathOperator{\Id}{Id}
\DeclareMathOperator{\Riem}{Riem}
\DeclareMathOperator{\GL}{GL}
\begin{document}

\title{Martingales on manifolds with time-dependent connection}

 \date{}

\author{Hongxin Guo\footnote{School of Mathematics and Information Science, Wenzhou University, Wenzhou, Zhejiang 325035,
China. E-mail address: \texttt{guo@wzu.edu.cn}}, 
Robert Philipowski and Anton Thalmaier\footnote{Mathematics Research Unit, FSTC, University of Luxembourg, 6, rue Richard Coudenhove-Kalergi, L-1359 Luxembourg, Grand Duchy of Luxembourg. E-mail adresses: \texttt{robert.philipowski@uni.lu},
\texttt{anton.thalmaier@uni.lu}}}


\renewcommand{\theequation}{\arabic{section}.\arabic{equation}}
\maketitle

\begin{abstract}
We define martingales on manifolds with time-dependent connection, extending in this way the theory of stochastic processes on
manifolds with time-changing geometry initiated by Arnaudon, Coulibaly and Thalmaier (2008). 
We show that some, but not all properties of martingales on manifolds with a fixed connection extend to this more general
setting.
\\\\
{\bf Keywords:} Stochastic analysis on manifolds, time-dependent geometry, martingales\\
{\bf AMS subject classification:} 53C44, 58J65, 60G44, 60G48
\end{abstract}

\section{Introduction}
Stochastic analysis on manifolds with a fixed connection or a fixed
Riemannian metric has been studied for a long time, see e.g.\ the
books by Hackenbroch and Thalmaier~\cite{hackenbroch} and
Hsu~\cite{hsu}. Motivated by Perelman's proof of the geometrization
and hence the Poincar\'e conjecture using Ricci flow~\cite{perelman1,
  perelman2,
  perelman3}, 
Arnaudon, Coulibaly and Thalmaier~\cite{ACT} introduced Brownian
motion on a manifold with a time-dependent Riemannian metric. Thanks
to the subsequent papers by Coulibaly-Pasquier~\cite{Coulibaly},
Kuwada and Philipowski~\cite{kuwadaphilipowski, kuwadaphilipowski2}
and Paeng~\cite{paeng}, Brownian motion in such a time-dependent
framework is now well understood.

Stochastic analysis on manifolds, however, is not restricted to the
study of Brownian motion. Another important topic is martingale
theory, which in the case of a fixed connection is treated in depth in
e.g.\ \cite{emery, hackenbroch, hsu}, but which has not yet been
studied in the case of a time-dependent
connection. 
The aim of the present paper is to fill this gap.

\section{Horizontal lift, stochastic parallel transport and stochastic
  development 
  on manifolds with time-dependent connection}

Let $\M$ be a $d$-dimensional differentiable manifold, $\pi: \F(M) \to
M$ the frame bundle and $(\nabla(t))_{t \geq 0}$ a family of linear
connections on $M$ depending smoothly on
$t$. 
Let $(\Omega, \F, P, (\F_t)_{t \geq 0})$ be a filtered probability
space. Throughout the whole paper, the notions of martingale,
semimartingale, etc.\ are understood with respect to this
filtration. Moreover, all processes are tacitly assumed to be
continuous.

\begin{definition}[cf.\ {\cite[Definition~7.135]{hackenbroch}} for the case of a fixed connection]
  An $\F(M)$-valued semimartingale $U$ is said to be {\em
    $(\nabla(t))_{t \geq 0}$-horizontal} if
  \begin{equation}\label{hor1a}
    \omega_t({}\circ dU_t) = 0,
  \end{equation}
  where $\omega_t$ is the $\R^{d \times d}$-valued connection form
  with respect to $\nabla(t)$.
\end{definition}

\begin{proposition}[cf.\ {\cite[Satz~7.141]{hackenbroch}} for the case of a fixed connection]\label{lift}
  Let $X$ be an $M$-valued semimartingale and $U_0$ an
  $\F_0$-measurable $\F(M)$-valued random variable with $\pi U_0 =
  X_0$. Then there exists a unique $(\nabla(t))_{t \geq 0}$-horizontal
  lift $U$ of $X$ starting at $U_0$, i.e.\ an $\F(M)$-valued
  semimartingale satisfying \eqref{hor1a} and $\pi U = X$. Moreover,
  starting with an arbitrary lift $\U$ of $X$ satisfying $\U_0 = U_0$,
  the horizontal lift $U$ can be constructed in the following way: Let
  \begin{equation}\label{gamma}
    \gamma_t := \int_0^t \omega_s({}\circ d\U_s),
  \end{equation}
  and $G$ the solution of the $\GL_d(\R)$-valued SDE
  \begin{equation}\label{gproc}
    d G_t = -\sum_{\alpha,\beta=1}^d E_{\alpha \beta} G_t \circ d\gamma_t^{\alpha \beta}, \qquad G_0 = I,
  \end{equation}
  where $ E_{\alpha \beta} \in \R^{d \times d}$ is the matrix whose
  $(ij)$-entry is $1$ if $i = \alpha$ and $j = \beta$, and $0$
  otherwise.  Then
  \begin{equation}\label{ug}
    U_t = \U_t  G_t.
  \end{equation}
\end{proposition}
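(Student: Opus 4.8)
The plan is to reduce the whole statement to two classical structural properties of a connection form --- right-equivariance and the reproducing property on fundamental vertical vector fields --- applied \emph{time-slice by time-slice}, combined with the Stratonovich transfer (It\^o) formula. Throughout, for $A \in \R^{d\times d}$ write $A^{\ast}$ for the fundamental vertical vector field on $\F(M)$ generated by $A$ via the right $\GL_d(\R)$-action, so that $A^{\ast}_u = \frac{d}{ds}\big|_{s=0}\,u\exp(sA)$. The two facts I shall use are that, for each fixed $t$, the connection form of $\nabla(t)$ satisfies $R_g^{\ast}\omega_t = \Ad(g^{-1})\circ\omega_t$ and $\omega_t(A^{\ast}) = A$; these are purely geometric identities for the single connection $\nabla(t)$ and hold for every $t$ simultaneously.

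First I would settle existence and uniqueness of the process $G$. Equation~\eqref{gproc} is, after the elementary matrix identity $\sum_{\alpha,\beta}E_{\alpha\beta}G\,d\gamma^{\alpha\beta} = (d\gamma)G$, nothing but the linear Stratonovich equation $\circ dG_t = -(\circ d\gamma_t)\,G_t$ driven by the continuous semimartingale $\gamma$. Linear Stratonovich equations admit a unique global solution, and one checks that $H_t := G_t^{-1}$ solves $\circ dH_t = H_t\,(\circ d\gamma_t)$, whence $\circ d(G_tH_t) = 0$ and $G_tH_t \equiv \Id$; thus $G$ is genuinely $\GL_d(\R)$-valued. Since the right action preserves fibres, $U_t := \U_t G_t$ satisfies $\pi U_t = \pi\U_t = X_t$, so $U$ is a lift of $X$. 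Existence of \emph{some} initial lift $\U$ with $\U_0 = U_0$ is standard, as $\pi$ is a surjective submersion and semimartingales lift along submersions.

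The heart of the proof is the verification that this $U$ is horizontal. Writing $U_t = R_{G_t}\U_t$ and applying the Stratonovich chain rule to the map $(u,g)\mapsto ug$ gives
\[
  \circ dU_t = (R_{G_t})_{\ast}(\circ d\U_t) + \bigl(G_t^{-1}\circ dG_t\bigr)^{\ast}_{U_t}.
\]
Pairing with $\omega_t$ and invoking the two identities above slice-wise yields
\[
  \omega_t(\circ dU_t) = \Ad(G_t^{-1})\,\omega_t(\circ d\U_t) + G_t^{-1}\circ dG_t = G_t^{-1}\bigl((\circ d\gamma_t)\,G_t + \circ dG_t\bigr),
\]
since $\circ d\gamma_t = \omega_t(\circ d\U_t)$ by \eqref{gamma}. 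The defining equation $\circ dG_t = -(\circ d\gamma_t)G_t$ makes the right-hand side vanish, so \eqref{hor1a} holds. Uniqueness follows by the same computation: if $U^{(1)},U^{(2)}$ are horizontal lifts of $X$ with $U^{(1)}_0 = U^{(2)}_0$, write $U^{(2)}_t = U^{(1)}_t H_t$ with $H$ a $\GL_d(\R)$-valued semimartingale; the formula above, now with $U^{(1)}$ horizontal so that the corresponding $\gamma$ vanishes, gives $\omega_t(\circ dU^{(2)}_t) = H_t^{-1}\circ dH_t$, and horizontality of $U^{(2)}$ forces $\circ dH_t = 0$. Together with $H_0 = \Id$ this yields $H \equiv \Id$, hence $U^{(1)} = U^{(2)}$.

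I expect the only genuinely new difficulty, relative to the fixed-connection case of \cite[Satz~7.141]{hackenbroch}, to lie in giving a rigorous meaning to the pairing $\omega_t(\circ dU_t)$ of the \emph{time-dependent} form $\omega_t$ with a Stratonovich differential, and in confirming that the $t$-dependence of $\omega$ contributes no extra term. This is handled by the convention that, in local coordinates $\omega_s = \sum_i a_i(s,\cdot)\,du^i$, one sets $\int_0^t\omega_s(\circ dU_s) := \sum_i\int_0^t a_i(s,U_s)\circ dU_s^i$, so that time enters only through the coefficients evaluated at the current instant while the Stratonovich differential remains purely spatial. With this reading the equivariance and reproducing identities, being valid for each $\nabla(t)$ separately, transfer directly to the stochastic level, the computation above is legitimate, and no $\partial_t\omega$ term appears.
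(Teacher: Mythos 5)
Your proposal is correct and takes essentially the same route as the paper's proof: you verify horizontality of $U_t = \U_t G_t$ via the equivariance $R_g^*\omega_t = \Ad(g^{-1})\,\omega_t$ plus the vertical contribution $G_t^{-1}\circ dG_t$ --- your reproducing-property formulation $\omega_t(A^{\ast}) = A$ is the paper's pullback identity $(\Phi^*\omega_t)_{(u,g)} = (R_g^*\omega_t)_u + \theta_g$ in different clothing --- and your uniqueness argument is the identical computation forcing $\circ dH_t = 0$. The additional details you supply (global solvability and invertibility of $G$, existence of an initial lift, and the slice-wise convention for pairing the time-dependent form with a Stratonovich differential) are standard points the paper leaves implicit and are consistent with its argument.
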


\begin{proof}
  We first show that the process $U$ defined by \eqref{ug} is indeed
  $(\nabla(t))_{t \geq 0}$-horizontal. Letting $\Phi: \F(M) \times
  \GL_d(\R) \to \F(M)$ defined by $\Phi(u,g) := ug$, we have
  \[
  (\Phi^*\omega_t)_{(u,g)} = (R_g^*\omega_t)_u + \theta_g,
  \]
  where $\theta_g := dL_g^{-1}$ ($L_g$ and $R_g$ denoting left resp.\
  right multiplication with $g$). Since moreover by
  \cite[Bemerkung~7.128~(ii)]{hackenbroch}, $R_g^* \omega_t =
  \Ad(g^{-1}) \omega_t$, we obtain
  \begin{align*}
    \omega_t({}\circ dU_t)
    & =  \omega_t({}\circ d\Phi(\U_t, G_t))\\
    & =  (\Phi^* \omega_t)({}\circ d(\U_t, G_t))\\
    & =  (R_{G_t}^* \omega_t)({}\circ d\U_t) + \theta({}\circ dG_t)\\
    & =  \Ad(G_t^{-1}) \omega_t({}\circ d\U_t) + dL_{G_t}^{-1}({}\circ dG_t)\\
    & = \Ad(G_t^{-1}) {}\circ d \gamma_t - \sum_{\alpha,\beta=1}^d
    G_t^{-1} E_{\alpha \beta} G_t \circ d\gamma_t^{\alpha \beta} = 0.
  \end{align*}
  To show uniqueness assume that $U'$ is another $(\nabla(t))_{t \geq
    0}$-horizontal lift of $X$ with $U'_0 = U_0$. Then $U = U' G$ with
  a $\GL^d(\R)$-valued semimartingale $G = (G_t)_{t \geq 0}$ starting
  at $I$. The above computation yields $dL_{G_t}^{-1}({}\circ dG_t) =
  0$ and hence $dG_t = 0$, so that $G_t = I$ for all $t \geq 0$.
\end{proof}

\begin{definition}[cf.\ {\cite[Definition~7.144]{hackenbroch}} for the case of a fixed connection]
  The {\em $(\nabla(t))_{t \geq 0}$-parallel transport} along an
  $M$-valued semimartingale $X$ is the family of isomorphisms
  $\parr_{s,t}: T_{X_s}M \to T_{X_t}M$ ($0 \leq s \leq t$) defined by
  \[
  \parr_{s,t} := U_t U_s^{-1},
  \]

  where 
  $U$ is an arbitrary $(\nabla(t))_{t \geq 0}$-horizontal lift of
  $X$. (As in the case of a fixed connection, the result does not
  depend on the choice of the horizontal lift.)
\end{definition}

\begin{definition}[cf.\ {\cite[Definition~7.136]{hackenbroch}} for the case of a fixed connection]
  Let $X$ be an $M$-valued semimartingale, $U_0$ an $\F_0$-measurable
  $\F(M)$-valued random variable with $\pi U_0 = X_0$, and $U$ the
  unique $(\nabla(t))_{t \geq 0}$-horizontal lift of $X$ starting at
  $U_0$. The $\R^d$-valued process
  \begin{equation*}
    Z_t := \int_0^t \vartheta({}\circ dU_s)
  \end{equation*}
  is called the $(\nabla(t))_{t \geq 0}${\em-antidevelopment} of $X$
  (or $U$) with initial frame $U_0$; here $\vartheta$ is the canonical
  $\R^d$-valued 1-form on $\F(M)$,
$$ \vartheta_u(w)=u^{-1}(d\pi w),\quad w\in T_u\F(M).$$
\end{definition}

\begin{remark}\label{recov}
  A $(\nabla(t))_{t \geq 0}$-horizontal semimartingale $U$ can be
  recovered from its $(\nabla(t))_{t\geq 0}$-antidevelopment $Z$ and
  its initial value $U_0$ as the solution of the SDE
  \begin{equation*}
    dU_t = \sum_{i=1}^d H_i^{\nabla(t)}(U_t) \circ dZ_t^i,
  \end{equation*}
  where $(H_i^{\nabla(t)})_{i=1}^d$ are the standard
  $\nabla(t)$-horizontal vector fields on $\F(M)$, i.e.,
  \begin{equation*}
    H_i^{\nabla(t)}(u)=h^{\nabla(t)}_u(ue_i),\quad u\in\F(M),
  \end{equation*}
  with $h_u^{\nabla(t)}\colon\,T_{\pi(u)}M\to T_u\F(M)$ of the
  connection $\nabla(t)$.
\end{remark}

\begin{proof}
  Let $V$ be the solution of the SDE
  \[
  dV_t = \sum_{i=1}^d H_i^{\nabla(t)}(V_t) \circ dZ_t^i, \qquad V_0 =
  U_0.
  \]
  Then $V$ is a $(\nabla(t))_{t \geq 0}$-horizontal lift of $\pi U$
  with the same initial value as $U$, hence $V = U$.
\end{proof}

\begin{corollary}
  \label{ito1}
  Let $U$ be a $(\nabla(t))_{t \geq 0}$-horizontal semimartingale, and
  $X := \pi U$. Then we have the following It\^o formulas:
  \begin{enumerate}
  \item For all smooth functions $f$ on $\R_+ \times \F(M)$ we have
    \begin{align*}
      d f(t, U_t) & =  \frac{\partial f}{\partial t}(t, U_t) dt + \sum_{i=1}^d H_i^{\nabla(t)} \!  f(t,U_t) \circ d Z_t^i \\
      & = \frac{\partial f}{\partial t}(t, U_t) dt + \sum_{i=1}^d
      H_i^{\nabla(t)} \!  f(t,U_t) \,d Z_t^i + \frac{1}{2}
      \sum_{i,j=1}^d H_i^{\nabla(t)} \! H_j^{\nabla(t)} \!  f(t,U_t)
      \,d \langle Z^i,Z^j \rangle_t.
    \end{align*}

  \item For all smooth functions $f$ on $\R_+ \times M$ we have
    \begin{align*}
      d f(t, X_t) & =  \frac{\partial f}{\partial t}(t, X_t) dt + \sum_{i=1}^d (U_t e_i) f(t,X_t)  \circ d Z_t^i \\
      & = \frac{\partial f}{\partial t}(t, X_t) dt + \sum_{i=1}^d (U_t
      e_i) f(t,X_t) \,d Z_t^i + \frac{1}{2} \sum_{i,j=1}^d
      \Hess^{\nabla(t)} \! f(U_t e_i, U_t e_j) \,d \langle Z^i,Z^j
      \rangle_t.
    \end{align*}
  \end{enumerate}
\end{corollary}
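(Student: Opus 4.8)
The plan is to reduce both Itô formulas to the standard change-of-variables formula for a \emph{time-independent} connection by enlarging the state space with a time coordinate, and then to read off the second-order (Hessian) term from the Stratonovich-to-Itô conversion.

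First I would invoke the SDE from Remark~\ref{recov}, namely $dU_t = \sum_{i=1}^d H_i^{\nabla(t)}(U_t)\circ dZ_t^i$, and pass to the space-time manifold $N := \R_+\times\F(M)$. The crucial observation is that on $N$ the time-dependence of the vector fields can be absorbed into the base point: writing $Y_t := (t,U_t)$ and introducing the genuinely time-independent vector fields $\partial_t$ and $\hat H_i(t,u):=(0,H_i^{\nabla(t)}(u))$ on $N$, the SDE becomes $dY_t = \partial_t(Y_t)\,dt + \sum_i \hat H_i(Y_t)\circ dZ_t^i$, where each $\hat H_i$ is a smooth vector field on $N$ even though $H_i^{\nabla(t)}$ varies with $t$. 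Applying the ordinary Stratonovich chain rule on $N$ to $f\colon N\to\R$ then gives $df(Y_t) = (\partial_t f)(Y_t)\,dt + \sum_i(\hat H_i f)(Y_t)\circ dZ_t^i$, and since $(\hat H_i f)(t,u) = (H_i^{\nabla(t)}f)(t,u)$ (the horizontal field acting on the frame-bundle variable with $t$ frozen), this is exactly the first displayed identity of part~(1). Note that no extra drift arises from the evolution of the connection, because $\pi$ is time-independent and the only explicit time-derivative is $\partial_t f$.

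For the second identity of part~(1) I would convert each Stratonovich integral into an Itô integral, the correction term being $\tfrac12\sum_i d\langle (\hat H_i f)(Y),Z^i\rangle_t$. Since the explicit $t$-dependence and all drift parts contribute only finite-variation terms, the bracket only sees the martingale part of $(\hat H_i f)(Y)$, whence $d\langle(\hat H_i f)(Y),Z^i\rangle_t = \sum_j(\hat H_j\hat H_i f)(Y_t)\,d\langle Z^j,Z^i\rangle_t = \sum_j(H_j^{\nabla(t)}H_i^{\nabla(t)}f)(t,U_t)\,d\langle Z^j,Z^i\rangle_t$. Symmetrizing in $(i,j)$ via $\langle Z^i,Z^j\rangle = \langle Z^j,Z^i\rangle$ yields the stated $\tfrac12\sum_{i,j}H_i^{\nabla(t)}H_j^{\nabla(t)}f(t,U_t)\,d\langle Z^i,Z^j\rangle_t$.

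Finally, part~(2) I would deduce from part~(1) applied to the lift $F(t,u):=f(t,\pi u)$, for which $f(t,X_t)=F(t,U_t)$ and $\partial_t F(t,u)=\partial_t f(t,\pi u)$. The two geometric inputs are the first-order identity $H_i^{\nabla(t)}F(t,u)=(ue_i)f(t,\cdot)$ (since $d\pi\,H_i^{\nabla(t)}(u)=ue_i$) and the second-order identity $H_i^{\nabla(t)}H_j^{\nabla(t)}F(t,u)=\Hess^{\nabla(t)}f(ue_i,ue_j)$. The latter is the real geometric content and the main point to verify: at each frozen time $t$ it is the classical fact that differentiating $u\mapsto df(ue_j)$ along the $\nabla(t)$-horizontal curve with initial velocity $H_i^{\nabla(t)}(u)$ produces the second covariant derivative, because the frame $u(s)e_j$ is $\nabla(t)$-parallel, so $\tfrac{D}{ds}(u(s)e_j)=0$ drops out and one is left with $(\nabla(t)_{ue_i}df)(ue_j)=\Hess^{\nabla(t)}f(ue_i,ue_j)$. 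Having frozen $t$ before invoking it, this identity transfers verbatim from the fixed-connection theory, and substituting both identities into part~(1) gives part~(2).
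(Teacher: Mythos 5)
Your proof is correct and takes essentially the route the paper intends: Corollary~\ref{ito1} is given without an explicit proof precisely because it follows immediately from the SDE $dU_t=\sum_{i=1}^d H_i^{\nabla(t)}(U_t)\circ dZ_t^i$ of Remark~\ref{recov} together with standard Stratonovich--It\^o calculus and the classical frozen-time identities $H_i^{\nabla(t)}(f\circ\pi)(u)=(ue_i)f$ and $H_i^{\nabla(t)}H_j^{\nabla(t)}(f\circ\pi)(u)=\Hess^{\nabla(t)}\!f(ue_i,ue_j)$. Your space-time absorption of the time dependence into vector fields on $\R_+\times\F(M)$ is a clean way of making that derivation rigorous, and your observation that the antisymmetric part of $H_i^{\nabla(t)}H_j^{\nabla(t)}$ is immaterial against the symmetric bracket $\langle Z^i,Z^j\rangle$ is exactly the right justification.
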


\begin{remark}\label{dev}
  In the situation of Proposition~\ref{lift} let $\Z_t := \int_0^t
  \vartheta({}\circ d \U_s)$. Then
  \begin{equation}\label{devfor}
    dZ_t = G_t^{-1} \circ d \Z_t.
  \end{equation}
\end{remark}

\begin{proof}
  Since $\pi U_t = \pi \U_t$, we have
  \[
  dZ_t = \vartheta({}\circ d U_t) = U_t^{-1} \circ \pi_* d U_t =
  G_t^{-1} \U_t^{-1} \circ \pi_* d \U_t = G_t^{-1} \vartheta({}\circ d
  \U_t) = G_t^{-1} \circ d \Z_t. \qedhere
  \]
\end{proof}

\section{Alternative definition of horizontality in the Riemannian case}
In this section we assume that for each $t \geq 0$ the connection
$\nabla(t)$ is the Levi-Civita connection of a Riemannian metric
$g(t)$ depending smoothly on $t$ (we call this the Riemannian
case). In this situation it seems natural to require that each $U_t$
takes values in the $g(t)$-orthonormal frames of~$M$, i.e. $U_t \in
\OO_{g(t)}(M)$ for all $t \geq 0$. To ensure this, one has to add a
correction term to~\eqref{hor1a}.

\begin{definition}
  An $\F(M)$-valued semimartingale $U$ is said to be {\em $(g(t))_{t
      \geq 0}$-Riemann-horizontal} if $U_0 \in \OO_{g(0)}(M)$ and
  \begin{equation}\label{horriema}
    \omega_t({}\circ dU_t) 
    = - \frac{1}{2} \sum_{\alpha, \beta = 1}^d \frac{\partial g}{\partial t}(t, U_te_\alpha, U_t e_\beta) E_{\alpha \beta}dt.
  \end{equation}
\end{definition}

In Proposition~\ref{orthonormal} below we will show that any
$(g(t))_{t \geq 0}$-Riemann-horizontal semimartingale $U$ satisfies
indeed $U_t \in \OO_{g(t)}(M)$ for all $t \geq 0$.  Before doing so we
show that the results of the previous section carry over to $(g(t))_{t
  \geq 0}$-Riemann-horizontal processes with appropriate
modifications:

\begin{proposition}\label{riemhor2}
  Let $X$ be an $M$-valued semimartingale and $U_0$ an
  $\F_0$-measurable $\OO_{g(0)}(M)$-valued random variable with $\pi
  U_0 = X_0$. Then there exists a unique $(g(t))_{t \geq
    0}$-Riemann-horizontal lift $U$ of $X$ starting at $U_0$, i.e.\ an
  $\F(M)$-valued semimartingale satisfying \eqref{horriema} and $\pi U
  = X$. Moreover, starting with an arbitrary lift $\U$ of $X$
  satisfying $\U_0 = U_0$, the $(g(t))_{t \geq 0}$-Riemann-horizontal
  lift $U$ can be constructed in the following way: Let
  \begin{equation}\label{gamma2}
    \gamma_t := \int_0^t \omega_s({}\circ d\U_s),
  \end{equation}
  and $G$ the solution of the $\GL_d(\R)$-valued SDE
  \begin{equation}\label{gproc2}
    d G_t = -\sum_{\alpha,\beta=1}^d E_{\alpha \beta} G_t \circ d\gamma_t^{\alpha \beta} - \frac{1}{2} \sum_{\alpha, \beta = 1}^d \frac{\partial g}{\partial t}(t, \tilde U_t G_t e_\alpha, \tilde U_t G_t e_\beta) G_t E_{\alpha \beta} dt, \qquad G_0 = I,
  \end{equation}
  Then
  \begin{equation}\label{ug2}
    U_t = \U_t  G_t.
  \end{equation}
\end{proposition}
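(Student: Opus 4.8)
The plan is to mirror the proof of Proposition~\ref{lift}, carrying the extra $dt$-drift through the same computation.

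First I would prove existence by setting $U_t:=\U_tG_t$ with $G$ the solution of~\eqref{gproc2} and checking~\eqref{horriema} directly. Using exactly the decomposition of $\Phi^*\omega_t$ from the proof of Proposition~\ref{lift} (with $\Phi(u,g)=ug$, $R_g^*\omega_t=\Ad(g^{-1})\omega_t$ and $\theta_g=dL_g^{-1}$) one obtains
\[
\omega_t({}\circ dU_t)=\Ad(G_t^{-1})\,\omega_t({}\circ d\U_t)+G_t^{-1}{}\circ dG_t=\Ad(G_t^{-1}){}\circ d\gamma_t+G_t^{-1}{}\circ dG_t .
\]
Substituting~\eqref{gproc2}, using $\U_tG_te_\alpha=U_te_\alpha$ and $G_t^{-1}(G_tE_{\alpha\beta})=E_{\alpha\beta}$, the two terms involving $\gamma$ cancel exactly as in Proposition~\ref{lift}, and the residual is precisely the right-hand side of~\eqref{horriema}. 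The point worth stressing is that the factor $G_t$ standing to the \emph{left} of $E_{\alpha\beta}$ in the drift of~\eqref{gproc2} is tailored so that left multiplication by $G_t^{-1}=dL_{G_t}^{-1}$ reproduces the correction term with no leftover conjugation.

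For uniqueness I would take a second Riemann-horizontal lift $U'$ with $U'_0=U_0$ and write $U=U'G$ with a $\GL_d(\R)$-valued semimartingale $G$, $G_0=I$. The same pullback identity applied to both horizontality equations yields
\[
G_t^{-1}{}\circ dG_t=\Big(-\tfrac12\sum_{\alpha,\beta}\tfrac{\partial g}{\partial t}(t,U_te_\alpha,U_te_\beta)E_{\alpha\beta}+\tfrac12\,\Ad(G_t^{-1})\sum_{\alpha,\beta}\tfrac{\partial g}{\partial t}(t,U'_te_\alpha,U'_te_\beta)E_{\alpha\beta}\Big)\,dt ,
\]
with $U_t=U'_tG_t$. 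The right-hand side is of finite variation, so $G$ is itself of finite variation and the relation is a genuine (pathwise, random) ODE whose coefficient is smooth in $G_t$; since the right-hand side vanishes identically at $G_t=I$ (there $U_t=U'_t$ and $\Ad(I)=\mathrm{id}$), the constant process $G\equiv I$ solves it. Uniqueness for ODEs with locally Lipschitz coefficients then forces $G_t=I$, i.e.\ $U=U'$.

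The main obstacle, absent in Proposition~\ref{lift}, is that~\eqref{gproc2} is no longer linear in $G$: its drift coefficient $\frac{\partial g}{\partial t}(t,\U_tG_te_\alpha,\U_tG_te_\beta)\,G_tE_{\alpha\beta}$ is cubic in $G_t$, hence only locally Lipschitz and a priori liable to explode. I would therefore solve~\eqref{gproc2} up to its explosion time $\tau$; on $[0,\tau)$ the process $U=\U G$ is Riemann-horizontal, so Proposition~\ref{orthonormal} gives $U_t\in\OO_{g(t)}(M)$, and since $\U$ is a fixed continuous frame process this confines $G_t=\U_t^{-1}U_t$ to a relatively compact subset of $\GL_d(\R)$ over each compact time interval, ruling out explosion and yielding $\tau=\infty$. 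Apart from this point, the verification is the bookkeeping of keeping the $G_t$, $G_t^{-1}$ and $\Ad$ factors in their correct positions.
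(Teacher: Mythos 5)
Your proposal is correct, and its core computation coincides with the paper's: the same pullback decomposition $(\Phi^*\omega_t)_{(u,g)}=(R_g^*\omega_t)_u+\theta_g$ with $R_g^*\omega_t=\Ad(g^{-1})\,\omega_t$, the same cancellation of the $\gamma$-terms against $-\sum_{\alpha,\beta}G_t^{-1}E_{\alpha\beta}G_t\circ d\gamma_t^{\alpha\beta}$, and the same observation that the left-placed factor $G_t$ in the drift of \eqref{gproc2} is exactly what makes $dL_{G_t}^{-1}$ reproduce the right-hand side of \eqref{horriema} via $\U_tG_te_\alpha=U_te_\alpha$. Where you go beyond the paper, usefully, is at the two points it treats tersely. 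First, the paper settles uniqueness with ``in the same way as in Proposition~\ref{lift}'', which is slightly loose: in Proposition~\ref{lift} the computation forces $dG_t=0$ outright, whereas here one only obtains the finite-variation relation you display, whose right-hand side vanishes at $G_t=I$ but is not identically zero; so the conclusion $G\equiv I$ genuinely requires the pathwise ODE argument (finite variation, hence Stratonovich equals Lebesgue--Stieltjes, locally Lipschitz coefficient, uniqueness) that you spell out. Second, the paper tacitly assumes that \eqref{gproc2} has a global solution, even though its drift is cubic in $G_t$ and hence, unlike the linear SDE \eqref{gproc}, not covered by global existence for linear equations; your remedy --- solve up to the explosion time $\tau$, apply Proposition~\ref{orthonormal} on $[0,\tau)$ to get $U_t\in\OO_{g(t)}(M)$ and thereby confine $G_t=\U_t^{-1}U_t$ to a relatively compact subset of $\GL_d(\R)$ on compact time intervals, and conclude $\tau=\infty$ --- closes this gap. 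This use of Proposition~\ref{orthonormal} is not circular: its proof (via Corollary~\ref{ito2}) applies to any given Riemann-horizontal semimartingale on a stochastic interval and does not rely on the existence assertion being proved, while the uniqueness it implicitly invokes is the part you establish independently. Both refinements are correct and strengthen the paper's own proof.
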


\begin{proof}
  We first show that the process $U$ defined by \eqref{ug2} is indeed
  $(g(t))_{t \geq 0}$-Riemann-horizontal. As in the proof of
  Proposition~\ref{lift} we obtain
  \begin{align*}
    \omega_t({}\circ dU_t) & =  (R_{G_t}^* \omega_t)({}\circ d\U_t) + \theta({}\circ dG_t)\\
    & =  \Ad(G_t^{-1}) \omega_t({}\circ d\U_t) + dL_{G_t}^{-1}({}\circ dG_t)\\
    & =  \Ad(G_t^{-1}) \circ d \gamma_t - \sum_{\alpha,\beta=1}^d G_t^{-1} E_{\alpha \beta} G_t \circ d\gamma_t^{\alpha \beta} - \frac{1}{2} \sum_{\alpha, \beta = 1}^d \frac{\partial g}{\partial t}(t, \tilde U_t G_t e_\alpha, \tilde U_t G_t e_\beta) E_{\alpha \beta} dt \\
    & = - \frac{1}{2} \sum_{\alpha, \beta = 1}^d \frac{\partial
      g}{\partial t}(t, U_t e_\alpha, U_t e_\beta) E_{\alpha \beta}
    dt.
  \end{align*}
  Uniqueness of $U$ can be proved in the same way as in
  Proposition~\ref{lift}.
\end{proof}

\begin{definition}
  The {\em $(g(t))_{t \geq 0}$-Riemann-parallel transport} along an
  $M$-valued semimartingale $X$ is the family of isomorphisms
  $\parr_{s,t}: T_{X_s}M \to T_{X_t}M$ ($0 \leq s \leq t$) defined by
  \[
  \parr_{s,t} := U_t U_s^{-1},
  \]
  where $U$ is an arbitrary $(g(t))_{t \geq 0}$-Riemann-horizontal
  lift of $X$.
\end{definition}

\begin{definition}
  Let $X$ be an $M$-valued semimartingale, $U_0$ an $\F_0$-measurable
  $\OO_{g(0)}(M)$-valued random variable with $\pi U_0 = X_0$, and $U$
  the unique $(g(t))_{t \geq 0}$-Riemann-horizontal lift of $X$
  starting at $U_0$. The $\R^d$-valued process
  \[
  Z_t := \int_0^t \vartheta({}\circ dU_s)
  \]
  is called the $(g(t))_{t \geq 0}$-Riemann-antidevelopment of $X$ (or
  $U$) with initial frame $U_0$.
\end{definition}

\begin{remark}
  A $(g(t))_{t \geq 0}$-Riemann-horizontal process $U$ can be
  recovered from its $(g(t))_{t \geq 0}$-antidevelopment $X$ and its
  initial value $U_0$ as the solution of the SDE
  \[
  dU_t = \sum_{i=1}^d H_i^{\nabla(t)}(U_t) \circ dZ_t^i - \frac{1}{2}
  \sum_{\alpha, \beta = 1}^d \frac{\partial g}{\partial t}(t,
  U_te_\alpha, U_t e_\beta) V_{\alpha \beta}(U_t) dt,
  \]
  where $(V_{\alpha, \beta})_{\alpha, \beta = 1}^d$ are the canonical
  vertical vector fields defined as
  \[
  V^{\alpha \beta} f(u) = \left. \frac{d}{ds} \right|_{s=0} f(u (I +
  sE_{\alpha \beta}))
  \]
  ($I$ denoting the identity matrix).
\end{remark}

\begin{proof}
  Noting that $\omega_t(V_{\alpha \beta}) = E_{\alpha \beta}$ (by the
  definition of $\omega_t$), this can be proved in the same way as
  Remark~\ref{recov}.
\end{proof}

\begin{corollary}
  \label{ito2}
  Let $U$ be a $(g(t))_{t \geq 0}$-Riemann-horizontal semimartingale,
  and $X := \pi U$. Then we have the following It\^o formulas:
  \begin{enumerate}
  \item For all smooth functions $f$ on $\R_+ \times \F(M)$ we have
    \begin{align}
      d f(t, U_t) & =  \frac{\partial f}{\partial t}(t, U_t) dt + \sum_{i=1}^d H_i^{\nabla(t)} \!  f(t,U_t) \circ d Z_t^i  - \frac{1}{2} \sum_{\alpha, \beta = 1}^d \frac{\partial g}{\partial t}(t, U_t e_\alpha, U_t e_\beta) V_{\alpha \beta} f (t, U_t) dt \nonumber\\
      & =  \frac{\partial f}{\partial t}(t, U_t) dt + \sum_{i=1}^d H_i^{\nabla(t)} \!  f(t,U_t) d Z_t^i + \frac{1}{2} \sum_{i,j=1}^d H_i^{\nabla(t)} \! H_j^{\nabla(t)} \!  f(t,U_t) d \langle Z^i,Z^j \rangle_t \nonumber \\
      &\quad - \frac{1}{2} \sum_{\alpha, \beta = 1}^d \frac{\partial
        g}{\partial t}(t, U_t e_\alpha, U_t e_\beta) V_{\alpha \beta}
      f (t, U_t) dt. \label{itoformula}
    \end{align}

  \item For all smooth functions $f$ on $\R_+ \times M$ we have
    \begin{align*}
      d f(t, X_t) & =  \frac{\partial f}{\partial t}(t, X_t) dt + \sum_{i=1}^d (U_t e_i) f(t,X_t)  \circ d Z_t^i \\
      & = \frac{\partial f}{\partial t}(t, X_t) dt + \sum_{i=1}^d (U_t
      e_i) f(t,X_t) d Z_t^i + \frac{1}{2} \sum_{i,j=1}^d
      \Hess^{\nabla(t)} \!\! f(U_t e_i, U_t e_j) d \langle Z^i,Z^j
      \rangle_t.
    \end{align*}
  \end{enumerate}
\end{corollary}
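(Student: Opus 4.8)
The plan is to derive both It\^o formulas from the recovery SDE for $U$ stated in the Remark preceding the corollary, namely
\[
dU_t = \sum_{i=1}^d H_i^{\nabla(t)}(U_t) \circ dZ_t^i - \frac{1}{2} \sum_{\alpha, \beta = 1}^d \frac{\partial g}{\partial t}(t, U_t e_\alpha, U_t e_\beta) V_{\alpha \beta}(U_t)\, dt,
\]
together with the standard passage from Stratonovich to It\^o differentials.

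For part~(1), I would first apply the Stratonovich chain rule to the time-dependent function $f$ along $U$. Since $t \mapsto t$ is of finite variation, adjoining it as an extra coordinate produces no Stratonovich correction, so that
\[
df(t, U_t) = \frac{\partial f}{\partial t}(t, U_t)\, dt + (d_u f)_t({}\circ dU_t),
\]
where $d_u f$ denotes the differential in the frame-bundle variable at frozen time. Substituting the recovery SDE and using $(d_u f)_t(H_i^{\nabla(t)}(U_t)) = H_i^{\nabla(t)} f(t, U_t)$ and $(d_u f)_t(V_{\alpha\beta}(U_t)) = V_{\alpha\beta} f(t, U_t)$ yields the first (Stratonovich) line at once. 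The second (It\^o) line then follows by converting $\sum_i H_i^{\nabla(t)} f(t, U_t) \circ dZ_t^i$ into its It\^o form: writing $\psi_i(t, u) := H_i^{\nabla(t)} f(t, u)$, the martingale part of $\psi_i(t, U_t)$ is $\sum_j H_j^{\nabla(t)} H_i^{\nabla(t)} f(t, U_t)\, dZ_t^j$ (the time derivative and the vertical drift being of finite variation and hence invisible to the bracket), so the Stratonovich correction contributes exactly $\frac12 \sum_{i,j} H_i^{\nabla(t)} H_j^{\nabla(t)} f(t, U_t)\, d\langle Z^i, Z^j\rangle_t$; the vertical-drift term is of finite variation and carries over unchanged.

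For part~(2), I would apply part~(1) to the pulled-back function $\bar f(t, u) := f(t, \pi u)$ on $\R_+ \times \F(M)$. Three identities reduce the frame-bundle formula to the stated one on $M$: first, $H_i^{\nabla(t)} \bar f(t, u) = (u e_i) f(t, \pi u)$, since $d\pi$ maps $H_i^{\nabla(t)}(u)$ to $u e_i$; second, $V_{\alpha\beta} \bar f \equiv 0$, because vertical vectors are annihilated by $d\pi$, which kills the entire vertical-drift correction; and third, $H_i^{\nabla(t)} H_j^{\nabla(t)} \bar f(t, u) = \Hess^{\nabla(t)} f(u e_i, u e_j)$, the standard relation between iterated horizontal derivatives and the connection Hessian, applied here at each frozen time $t$.

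The routine parts are the chain rule and the Hessian identity, which go through verbatim as in the fixed-connection case. The one point demanding care is the Stratonovich-to-It\^o conversion in part~(1): because the horizontal vector fields $H_i^{\nabla(t)}$ now depend on $t$, one must check that this $t$-dependence enters only through finite-variation terms and therefore does not contribute to the quadratic covariation $\langle \psi_i(\cdot, U), Z^i\rangle$. Once this is verified, the bracket reduces to the same expression as in the static setting and both formulas follow.
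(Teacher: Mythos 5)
Your proposal is correct and matches the paper's intended argument: the paper states Corollary~\ref{ito2} without a separate proof precisely because it follows, exactly as you argue, by substituting the recovery SDE of the preceding Remark into the chain rule and converting Stratonovich to It\^o differentials. Your extra care on the two delicate points --- that the $t$-dependence of $H_i^{\nabla(t)}$ and the vertical drift contribute only finite-variation terms and hence vanish from the bracket, and that $V_{\alpha\beta}(f\circ\pi)=0$ kills the $\partial g/\partial t$ correction in part~(2) --- is exactly what the paper leaves implicit.
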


\begin{proposition}
  \label{orthonormal}
  Let $U$ be a $(g(t))_{t \geq 0}$-Riemann-horizontal semimartingale.
  If $U_0 \in \OO_{g(0)}(M)$, then $U_t \in \OO_{g(t)}(M)$ for all $t
  \geq 0$.
\end{proposition}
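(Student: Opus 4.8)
The plan is to measure the failure of $U_t$ to be $g(t)$-orthonormal by the symmetric matrix-valued process $A_t=(A_t^{\alpha\beta})$ with
\[
A_t^{\alpha\beta} := g(t)(U_t e_\alpha, U_t e_\beta),
\]
and to prove that $A_t\equiv I$. To this end I would introduce the smooth functions $f_{\alpha\beta}$ on $\R_+\times\F(M)$ given by $f_{\alpha\beta}(t,u):=g(t)(ue_\alpha, ue_\beta)$, so that $A_t^{\alpha\beta}=f_{\alpha\beta}(t,U_t)$, and apply the It\^o formula \eqref{itoformula} of Corollary~\ref{ito2} to each $f_{\alpha\beta}$ (relabelling the summation indices there as $\mu,\nu$ to avoid a clash with the fixed indices $\alpha,\beta$).

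The decisive observation is that the horizontal contributions in \eqref{itoformula} vanish identically. Since for each fixed $t$ the connection $\nabla(t)$ is the Levi-Civita connection of $g(t)$, parallel transport along $\nabla(t)$ preserves $g(t)$; moving the frame $u$ along a $\nabla(t)$-horizontal curve therefore leaves $g(t)(ue_\alpha, ue_\beta)$ unchanged, so $H_i^{\nabla(t)} f_{\alpha\beta}(t,\cdot)\equiv 0$ for every $i$. Consequently the first-order Stratonovich term drops out, and, applying $H_j^{\nabla(t)}$ to the zero function, so does the second-order term; thus \eqref{itoformula} reduces to a pure $dt$-equation.

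What remains is to evaluate the two surviving $dt$-terms. Here $\partial_t f_{\alpha\beta}(t,u)=\frac{\partial g}{\partial t}(t,ue_\alpha,ue_\beta)$, while a direct computation from the definition of the vertical fields, using $E_{\mu\nu}e_\alpha=\delta_{\nu\alpha}e_\mu$, gives
\[
V_{\mu\nu}f_{\alpha\beta}(t,u)=\delta_{\nu\alpha}\,g(t)(ue_\mu,ue_\beta)+\delta_{\nu\beta}\,g(t)(ue_\alpha,ue_\mu).
\]
Writing $B_t^{\mu\nu}:=\frac{\partial g}{\partial t}(t,U_te_\mu,U_te_\nu)$ for the symmetric, continuous, path-dependent process of the $t$-derivative of the metric and substituting, I expect \eqref{itoformula} to collapse, upon using the symmetry of $B_t$ and $A_t$, to the matrix-valued linear equation
\[
dA_t=\Bigl(B_t-\tfrac12\bigl(B_tA_t+A_tB_t\bigr)\Bigr)\,dt.
\]

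Finally, since only a $dt$-term survives, this is a genuine pathwise linear ODE in $A_t$ with continuous coefficients, hence has a unique solution for the initial value $A_0=I$ guaranteed by $U_0\in\OO_{g(0)}(M)$. The constant path $A_t\equiv I$ solves it, because $B_t-\tfrac12(B_tI+IB_t)=0$; therefore $A_t=I$ for all $t$, which is precisely the assertion $U_t\in\OO_{g(t)}(M)$. The only real subtlety I anticipate is the bookkeeping in the penultimate step, namely checking that the vertical correction term built into the definition of Riemann-horizontality is exactly what cancels $\partial_t g$ along the diagonal; but this is a short symmetric-matrix computation rather than a genuine obstacle.
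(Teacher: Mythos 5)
Your proposal is correct, and its core is the same as the paper's: you apply the It\^o formula \eqref{itoformula} of Corollary~\ref{ito2} to the Gram functions $f_{\alpha\beta}(t,u)=g(t)(ue_\alpha,ue_\beta)$, kill the horizontal first- and second-order terms by metric-compatibility of $\nabla(t)$ (exactly the paper's argument), and play the vertical correction off against $\partial_t g$. Where you genuinely diverge is the endgame. The paper evaluates the vertical term only at frames $u\in\OO_{g(t)}(M)$ (its passage from $\langle u(I+sE_{\alpha\beta})e_i,\,u(I+sE_{\alpha\beta})e_j\rangle_{g(t)}$ to the Euclidean inner product uses orthonormality of $u$) and then asserts that the two $dt$-terms cancel; read literally, this only verifies that the constant path is consistent with the dynamics, and the step ruling out other evolutions is left implicit. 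You instead compute $V_{\mu\nu}f_{\alpha\beta}$ at an \emph{arbitrary} frame --- your formula $V_{\mu\nu}f_{\alpha\beta}(t,u)=\delta_{\nu\alpha}\,g(t)(ue_\mu,ue_\beta)+\delta_{\nu\beta}\,g(t)(ue_\alpha,ue_\mu)$ is correct, via $E_{\mu\nu}e_\alpha=\delta_{\nu\alpha}e_\mu$, and with $B_t$ symmetric the contraction indeed yields $\tfrac12(B_tA_t+A_tB_t)$ --- so that the Gram matrix satisfies the closed affine equation
\begin{equation*}
dA_t=\Bigl(B_t-\tfrac12\bigl(B_tA_t+A_tB_t\bigr)\Bigr)\,dt,
\end{equation*}
a pathwise linear ODE (for each fixed path, $B_t$ is a given continuous function of $t$, independent of $A$), whence uniqueness by Gronwall together with the obvious solution $A\equiv I$ finishes the proof. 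This buys a logically complete closure of exactly the point the paper glosses over, at the cost of a slightly longer vertical-term computation; the two proofs are otherwise the same.
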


\begin{proof}
  We have to show that $\langle U_t e_i, U_t e_j \rangle_{g(t)}$ is
  constant for all $i,j \in \{1, \ldots, d\}$.  To do so we fix $i,j
  \in \{1, \ldots, d\}$ and apply It\^o's formula~\eqref{itoformula}
  to the function $f(t,u) := \langle u e_i, u e_j
  \rangle_{g(t)}$. Obviously,
  \[
  \frac{\partial f}{\partial t}(t, u) = \frac{\partial g}{\partial
    t}(u e_i, u e_j).
  \]
  Since $f$ is constant along horizontal curves in $\F(M)$, we have
  \[
  H_i^{\nabla(t)} f = H_i^{\nabla(t)} H_j^{\nabla(t)} f = 0.
  \]
  Finally, for $u \in \OO_{g(t)}(M)$,
  \begin{align*}
    V^{\alpha \beta} f(t,u) & =  \left. \frac{d}{ds} \right|_{s=0} f(t, u (I + sE_{\alpha \beta}))\\
    & =  \left. \frac{d}{ds} \right|_{s=0} \langle u (I + sE_{\alpha \beta}) e_i, u (I + sE_{\alpha \beta}) e_j \rangle_{g(t)}\\
    & =  \left. \frac{d}{ds} \right|_{s=0} \langle (I + sE_{\alpha \beta}) e_i, (I + sE_{\alpha \beta}) e_j \rangle_{\R^d}\\
    & =  \langle E_{\alpha \beta} e_i, e_j \rangle_{\R^d} + \langle e_i, E_{\alpha \beta} e_j \rangle_{\R^d}\\
    & = \begin{cases}
      2 & \mbox{if } \alpha = \beta = i = j,\\
      1 & \mbox{if } i \neq j \mbox{ and } (\alpha = i, \beta = j \mbox{ or } \alpha = j, \beta = i),\\
      0 & \mbox{otherwise},
    \end{cases}
  \end{align*}
  so that
  \[
  \frac{1}{2} \sum_{\alpha, \beta = 1}^d \frac{\partial g}{\partial
    t}(t, u e_\alpha, u e_\beta) V_{\alpha \beta} f (t, u) =
  \frac{\partial g}{\partial t}(ue_i, ue_j) = - \frac{\partial
    f}{\partial t}(t,u). \qedhere
  \]
\end{proof}

\begin{remark}\label{dev2}
  In the situation of Proposition~\ref{riemhor2} let $\Z_t := \int_0^t
  \vartheta({}\circ d \U_s)$. Then
  \begin{equation}\label{devfor2}
    dZ_t = G_t^{-1} \circ d \Z_t.
  \end{equation}
\end{remark}

\begin{proof}
  This can be proved in the same way as Remark~\ref{dev}.
\end{proof}

\begin{remark}\label{relpar}
  Let $X$ be an $M$-valued semimartingale and $U_0$ an
  $\F_0$-measurable $\OO_{g(0)}(M)$-valued random variable with $\pi
  U_0 = X_0$. Then $X$ has on the one hand a unique $(\nabla(t))_{t
    \geq 0}$-horizontal lift $U$ starting at $U_0$, $(\nabla(t))_{t
    \geq 0}$-parallel transports $\parr_{s,t}$ ($0 \leq s \leq t$) and
  a $(\nabla(t))_{t \geq 0}$-anti\-de\-ve\-lop\-ment $Z$, and on the
  other hand a unique $(g(t))_{t \geq 0}$-Riemann-horizontal lift
  $U^{\Riem}$ starting at $U_0$, $(g(t))_{t \geq 0}$-Riemann-parallel
  transports $\parr_{s,t}^{\Riem}$ ($0 \leq s \leq t$) and a
  $(g(t))_{t \geq 0}$-Riemann-anti\-de\-ve\-lop\-ment $Z^{\Riem}$.
  Proposition~\ref{riemhor2} implies that
  \[
  d \left( U_t^{-1} U_t^{\Riem} \right) = -\frac{1}{2} U_t^{-1} \left(
    \frac{\partial g}{\partial t} \right)^\# U_t^{\Riem}dt
  \]
  and
  \[
  d \left( \parr_{0,t}^{-1} \parr_{0,t}^{\Riem} \right) =
  -\frac{1}{2} \parr_{0,t}^{-1} \left( \frac{\partial g}{\partial t}
  \right)^\# \parr_{0,t}^{\Riem}dt.
  \]
  Moreover in this case the process~$\gamma$ defined in \eqref{gamma}
  resp.\ \eqref{gamma2} and therefore also the process~$G$ defined in
  \eqref{gproc} resp.\ \eqref{gproc2} is of finite variation, so that
  the Stratonovich differential appearing in \eqref{devfor} resp.\
  \eqref{devfor2} may be replaced by an It\^o differential.
\end{remark}

\section{Quadratic variation and integration of
  1-forms} 

\begin{proposition}
  Let $X$ be an $M$-valued semimartingale, $U$ a $(\nabla(t))_{t \geq
    0}$-horizontal or $(g(t))_{t \geq 0}$-Riemann-horizontal lift of
  $X$, and $Z_t := \int_0^t \vartheta({}\circ dU_s)$ the corresponding
  $(\nabla(t))_{t \geq 0}$-antidevelopment resp.\ $(g(t))_{t \geq
    0}$-Riemann-antidevelopment. Then for every adapted $T^*M \otimes
  T^*M$-valued process~$B$ above~$X$ (i.e.\ $B_t \in T_{X_t}^*M
  \otimes T_{X_t}^*M$ for all $t \geq 0$) we have
  \[
  \int_0^t B_s (dX_s, dX_s) = \sum_{i,j=1}^d \int_0^t B_s(U_s e_i, U_s
  e_j) \, d \langle Z^i, Z^j \rangle_s.
  \]
\end{proposition}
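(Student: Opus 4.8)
The plan is to reduce the identity to a computation of ordinary quadratic covariations in a local chart and then to read off the result from the Itô formulas of the previous sections. First I would fix local coordinates $(x^1,\dots,x^d)$ on an open subset of $\M$ and write the tensor process in this chart as $B_s=\sum_{a,b=1}^d (B_s)_{ab}\,dx^a\otimes dx^b$ with adapted real-valued coefficients $(B_s)_{ab}$. Recall that the left-hand side is defined, via the usual localization by stopping times, through its chart expression
\[
\int_0^t B_s(dX_s,dX_s)=\sum_{a,b=1}^d\int_0^t (B_s)_{ab}\,d\langle X^a,X^b\rangle_s,\qquad X^a:=x^a(X).
\]
Since the right-hand side of the assertion is a globally well-defined process, it then suffices to prove the formula in each chart, i.e.\ to establish the bracket identity
\[
d\langle X^a,X^b\rangle_s=\sum_{i,j=1}^d (U_s e_i)x^a\,(U_s e_j)x^b\;d\langle Z^i,Z^j\rangle_s.
\]

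To obtain this identity I would apply the second Itô formula---Corollary~\ref{ito1}(2) in the $(\nabla(t))_{t\geq0}$-horizontal case and Corollary~\ref{ito2}(2) in the $(g(t))_{t\geq0}$-Riemann-horizontal case---to the time-independent function $f(t,x):=x^a$. As $\partial f/\partial t=0$, this gives
\[
dX_s^a=\sum_{i=1}^d (U_s e_i)x^a\;dZ_s^i+\tfrac12\sum_{i,j=1}^d \Hess^{\nabla(s)}\!x^a(U_s e_i,U_s e_j)\,d\langle Z^i,Z^j\rangle_s,
\]
so that the martingale part of $X^a$ coincides with that of $\sum_i\int (U_s e_i)x^a\,dZ_s^i$. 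The Hessian correction above is of finite variation, and in the Riemann-horizontal case the additional $\partial g/\partial t$ vertical term of \eqref{itoformula} never reaches the base-point coordinates $x^a$ either, so it contributes no extra term to $dX_s^a$. Since quadratic covariation depends only on martingale parts, the displayed bracket identity follows.

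It then remains to substitute and contract. Inserting the bracket identity into the chart expression yields
\[
\int_0^t B_s(dX_s,dX_s)=\sum_{i,j=1}^d\int_0^t\Bigl(\sum_{a,b=1}^d (B_s)_{ab}\,(U_s e_i)x^a\,(U_s e_j)x^b\Bigr)d\langle Z^i,Z^j\rangle_s,
\]
and, since the numbers $(U_s e_i)x^a$ are exactly the coordinate components of the tangent vector $U_s e_i$, the inner sum equals $B_s(U_s e_i,U_s e_j)$. This is the claimed formula in the chart, and patching the charts gives the global statement. The only point that genuinely needs care is the bracket identity of the second paragraph: one must confirm that none of the finite-variation contributions---the Hessian correction and, in the Riemannian setting, the extra $\partial g/\partial t$ vertical term---enters $\langle X^a,X^b\rangle$. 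This is precisely why a single formula is valid verbatim in both the $(\nabla(t))_{t\geq0}$-horizontal and the $(g(t))_{t\geq0}$-Riemann-horizontal settings.
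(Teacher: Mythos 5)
Your proof is correct, and it reaches the key identity by the same computational engine as the paper --- namely, applying the It\^o formulas of Corollaries~\ref{ito1}(2) resp.~\ref{ito2}(2) to scalar functions in order to express the bracket of real-valued images of $X$ through $d\langle Z^i,Z^j\rangle$ --- but your reduction of the tensor $B$ is genuinely different. The paper invokes \cite[Lemma~7.56~(iv)]{hackenbroch} to write $B_t=\sum_{\mu,\nu=1}^\ell B_t^{\mu\nu}(dh_\mu\otimes dh_\nu)(X_t)$ with finitely many \emph{globally defined} smooth functions $h_\mu\in C^\infty(M)$, which lets it compute $d\langle h_\mu(X),h_\nu(X)\rangle_s=\sum_{i,j}(U_se_i)h_\mu(X_s)\,(U_se_j)h_\nu(X_s)\,d\langle Z^i,Z^j\rangle_s$ directly, with no charts, no stopping times and no patching. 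You instead work in a coordinate chart with the coordinate functions $x^a$, which buys elementarity (no appeal to the tensor-decomposition lemma) at the price of some bookkeeping you largely delegate to ``the usual localization'': strictly speaking, Corollaries~\ref{ito1} and~\ref{ito2} apply to smooth functions on $\R_+\times M$, so the $x^a$ must first be cut off to global functions agreeing with the coordinates on a compact set the stopped process does not leave, and the chart identities must then be glued along a covering sequence of stopping times; both steps are routine but should be acknowledged as part of the argument rather than absorbed silently. Your two substantive observations are accurate and worth keeping: the quadratic covariation only sees the local-martingale parts, so the finite-variation Hessian correction drops out of $\langle X^a,X^b\rangle$; and in the Riemann-horizontal case the vertical $\partial g/\partial t$ term of \eqref{itoformula} annihilates functions pulled back from the base (indeed Corollary~\ref{ito2}(2) already carries no such term), which is exactly why one formula covers both horizontality notions --- the same point the paper handles by citing Corollary~\ref{ito1} resp.\ Corollary~\ref{ito2} in a single breath.
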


\begin{proof}
  By \cite[Lemma~7.56~(iv)]{hackenbroch} there exist $\ell \in \N$,
  real-valued adapted processes $(B^{\mu \nu})_{\mu,\nu=1}^\ell$ and
  functions $h_1, \ldots, h_\ell \in C^\infty(M)$ such that $ B_t =
  \sum_{\mu, \nu=1}^\ell B_t^{\mu \nu} (dh_\mu \otimes dh_\nu)(X_t) $
  for all $t \geq 0$. It follows that
  \begin{align*}
    \int_0^t B_s (dX_s, dX_s)
    & =  \sum_{\mu, \nu=1}^\ell \int_0^t (B_s^{\mu \nu} dh_\mu \otimes dh_\nu) (dX_s, dX_s)\\
    & = \sum_{\mu, \nu=1}^\ell \int_0^t B_s^{\mu \nu} d \langle
    h_\mu(X), h_\nu (X) \rangle_s.
  \end{align*}
  Since by It\^o's formula (Corollary~\ref{ito1} resp.\
  Corollary~\ref{ito2})
  \begin{align*}
    d \langle h_\mu(X), h_\nu (X) \rangle_s
    & = \sum_{i,j=1}^d (U_s e_i) h_\mu(X_s) (U_s e_j) h_\nu(X_s) \, d \langle Z^i, Z^j \rangle_s\\
    & = \sum_{i,j=1}^d (dh_\mu \otimes dh_\nu)(U_s e_i, U_s e_j) \, d
    \langle Z^i, Z^j \rangle_s,
  \end{align*}
  the claim follows.
\end{proof}

By choosing $B_s = \Hess^{\nabla(s)} \! f(X_s)$ or (in the Riemannian
case) $B_s = g(s,X_s)$ we obtain the following two corollaries:

\begin{corollary}\label{ito3}
  For all smooth functions $f$ on $\R_+ \times M$ we have
  \begin{equation*}
    df(t,X_t) = \frac{\partial f}{\partial t}(t,X_t) dt + \sum_{i=1}^d (U_t e_i) f(t,X_t) d Z_t^i + \frac{1}{2} \Hess^{\nabla(t)} \!\! f(dX_t, dX_t). 
  \end{equation*}
\end{corollary}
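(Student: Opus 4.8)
The plan is to read off the result directly from the second Itô formula of Corollary~\ref{ito1} (respectively Corollary~\ref{ito2} in the Riemannian case) combined with the quadratic-variation identity established in the preceding Proposition. That Itô formula already gives
\[
df(t,X_t) = \frac{\partial f}{\partial t}(t,X_t)\,dt + \sum_{i=1}^d (U_t e_i) f(t,X_t)\,dZ_t^i + \frac{1}{2} \sum_{i,j=1}^d \Hess^{\nabla(t)} \! f(U_t e_i, U_t e_j)\,d\langle Z^i, Z^j\rangle_t,
\]
so the entire task reduces to recognising the final double sum as $\frac{1}{2}\Hess^{\nabla(t)} \! f(dX_t, dX_t)$ in the sense defined by the Proposition.

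To this end I would set $B_s := \Hess^{\nabla(s)} \! f(s, X_s)$, the spatial Hessian of $f(s,\cdot)$ taken with respect to the connection $\nabla(s)$ and evaluated at $X_s$. For each fixed $s$ this spatial Hessian is a $(0,2)$-tensor field on $M$, whence $B_s \in T_{X_s}^*M \otimes T_{X_s}^*M$; moreover $B$ is adapted and lives above $X$, so it is an admissible integrand in the Proposition. Applying the Proposition with this choice yields, in differential form,
\[
\Hess^{\nabla(t)} \! f(dX_t, dX_t) = \sum_{i,j=1}^d \Hess^{\nabla(t)} \! f(U_t e_i, U_t e_j)\,d\langle Z^i, Z^j\rangle_t,
\]
which is precisely twice the It\^o correction term displayed above.

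Substituting this identity into the Itô formula gives the asserted expression, and the corollary follows. There is no genuine obstacle here, since the whole content is already contained in the two results just proved; the only point meriting attention is the verification that $B_s = \Hess^{\nabla(s)} \! f(s,X_s)$ indeed qualifies as an adapted $T^*M \otimes T^*M$-valued process above~$X$, so that the Proposition applies verbatim. In particular no symmetry of the Hessian is required, as the Proposition is stated for arbitrary such $B$.
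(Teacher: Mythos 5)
Your proposal is correct and is exactly the paper's argument: the paper obtains Corollary~\ref{ito3} by applying the quadratic-variation Proposition with $B_s = \Hess^{\nabla(s)} f$ (the spatial Hessian of $f(s,\cdot)$, evaluated along $X$) to rewrite the double sum in the second It\^o formula of Corollary~\ref{ito1} resp.~Corollary~\ref{ito2}. Your added remark checking that this $B$ is an adapted $T^*M \otimes T^*M$-valued process above $X$ is a sensible (if routine) verification that the paper leaves implicit.
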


\begin{corollary}
  In the Riemannian case,
  \[
  \int_0^t g(s)(dX_s, dX_s) = \sum_{i=1}^d \langle Z^i,Z^i \rangle_t.
  \]
\end{corollary}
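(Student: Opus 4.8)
The plan is to invoke the integration formula of the preceding Proposition with the specific choice $B_s = g(s, X_s)$, the time-$s$ metric tensor evaluated at the current position of $X$. This is a legitimate adapted $T^*M \otimes T^*M$-valued process above $X$, since $g(s, X_s) \in T_{X_s}^*M \otimes T_{X_s}^*M$ depends measurably on $s$ and on the adapted process $X$. Taking $U$ to be the $(g(t))_{t \geq 0}$-Riemann-horizontal lift of $X$ and $Z$ its Riemann-antidevelopment, the Proposition then gives
\[
\int_0^t g(s)(dX_s, dX_s) = \sum_{i,j=1}^d \int_0^t g(s)(U_s e_i, U_s e_j)\, d\langle Z^i, Z^j\rangle_s.
\]

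The decisive input is Proposition~\ref{orthonormal}: since $U_0 \in \OO_{g(0)}(M)$ for a Riemann-horizontal lift, we have $U_s \in \OO_{g(s)}(M)$ for every $s \geq 0$. Thus $(U_s e_i)_{i=1}^d$ is a $g(s)$-orthonormal basis of $T_{X_s}M$, whence $g(s)(U_s e_i, U_s e_j) = \langle e_i, e_j\rangle_{\R^d} = \delta_{ij}$ for all $s$. Substituting this into the display collapses the double sum to its diagonal,
\[
\int_0^t g(s)(dX_s, dX_s) = \sum_{i,j=1}^d \int_0^t \delta_{ij}\, d\langle Z^i, Z^j\rangle_s = \sum_{i=1}^d \langle Z^i, Z^i\rangle_t,
\]
which is exactly the asserted identity.

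I do not anticipate any genuine obstacle here: once the orthonormality of the Riemann-horizontal frame is available, the corollary is an immediate specialization of the general integration formula. All the substantive work --- in particular the verification that the correction term in \eqref{horriema} is precisely what keeps $U_t$ in $\OO_{g(t)}(M)$ --- has already been carried out in Proposition~\ref{orthonormal}. The one point worth flagging is that one must use the Riemann-horizontal lift, together with its matching Riemann-antidevelopment, rather than the bare $(\nabla(t))_{t\geq 0}$-horizontal lift; it is only for the former that orthonormality, and hence the clean evaluation $g(s)(U_s e_i, U_s e_j) = \delta_{ij}$, holds.
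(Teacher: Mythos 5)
Your proposal is correct and matches the paper's own derivation exactly: the paper obtains this corollary by choosing $B_s = g(s,X_s)$ in the preceding proposition, with the orthonormality $U_s \in \OO_{g(s)}(M)$ from Proposition~\ref{orthonormal} reducing $g(s)(U_s e_i, U_s e_j)$ to $\delta_{ij}$. Your flag that one must use the Riemann-horizontal lift and its matching Riemann-antidevelopment (rather than the bare $(\nabla(t))_{t\geq 0}$-horizontal lift) is precisely the right point of care, and is implicit in the paper's statement ``in the Riemannian case.''
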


\begin{proposition}
  Let $X$ be an $M$-valued semimartingale, $U$ a $(\nabla(t))_{t \geq
    0}$-horizontal or $(g(t))_{t \geq 0}$-Riemann-horizontal lift of
  $X$, and $Z_t := \int_0^t \vartheta({}\circ dU_s)$ the corresponding
  $(\nabla(t))_{t \geq 0}$-antidevelopment resp.\ $(g(t))_{t \geq
    0}$-Riemann-antidevelopment. Then for every adapted $T^*M$-valued
  process $\Psi$ above $X$ (i.e.\ $\Psi_t \in T_{X_t}^*M$ for all $t
  \geq 0$) we have
  \[
  \int_0^t \Psi_s ({}\circ dX_s) = \sum_{i=1}^d \int_0^t \Psi_s(U_s
  e_i) \circ d Z_s^i.
  \]
\end{proposition}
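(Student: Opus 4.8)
The plan is to mirror the proof of the preceding proposition, replacing the bilinear representation lemma by its linear counterpart and working throughout with Stratonovich rather than It\^o differentials. First I would invoke the representation result analogous to \cite[Lemma~7.56]{hackenbroch}: since $\Psi$ is an adapted $T^*M$-valued process above $X$, there exist $\ell \in \N$, real-valued adapted processes $(\Psi^\mu)_{\mu=1}^\ell$ and functions $h_1, \dots, h_\ell \in C^\infty(M)$ such that
\[
\Psi_t = \sum_{\mu=1}^\ell \Psi_t^\mu \, dh_\mu(X_t), \qquad t \geq 0.
\]
By linearity of the Stratonovich integral this reduces the claim to evaluating, for each $\mu$, the integral $\int_0^t \Psi_s^\mu \, dh_\mu({}\circ dX_s) = \int_0^t \Psi_s^\mu \circ d\bigl(h_\mu(X_s)\bigr)$, the second equality being the very definition of the Stratonovich integral of a $1$-form.

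Next I would apply the It\^o formula of Corollary~\ref{ito1} (resp.\ Corollary~\ref{ito2} in the Riemannian case) to the \emph{time-independent} function $h_\mu$. Since $\partial h_\mu/\partial t = 0$, its Stratonovich form collapses to
\[
d\bigl(h_\mu(X_s)\bigr) = \sum_{i=1}^d (U_s e_i) h_\mu(X_s) \circ dZ_s^i,
\]
with no correction term; this is identical in the $(\nabla(t))_{t\geq 0}$-horizontal and the $(g(t))_{t\geq 0}$-Riemann-horizontal cases, because the two It\^o formulas differ only through the $\partial g/\partial t$ term, which acts on the time slot of $f$ and hence annihilates $h_\mu$. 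Substituting this expression into the Stratonovich integral and using associativity of Stratonovich integration, I obtain
\[
\int_0^t \Psi_s^\mu \circ d\bigl(h_\mu(X_s)\bigr) = \sum_{i=1}^d \int_0^t \Psi_s^\mu \, (U_s e_i) h_\mu(X_s) \circ dZ_s^i.
\]

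Finally I would sum over $\mu$ and recognise that
\[
\sum_{\mu=1}^\ell \Psi_s^\mu \, (U_s e_i) h_\mu(X_s) = \sum_{\mu=1}^\ell \Psi_s^\mu \, dh_\mu(X_s)(U_s e_i) = \Psi_s(U_s e_i),
\]
which yields the asserted identity. The computation is essentially formal once these ingredients are assembled, and the step most deserving of care is the Stratonovich bookkeeping: one must justify that $dh_\mu({}\circ dX_s)$ coincides with $\circ\, d(h_\mu(X_s))$ and that the resulting iterated Stratonovich integral associates as written. These are precisely the features that make the Stratonovich formulation cleaner than its It\^o analogue here, since no second-order correction terms intervene; consequently the representation lemma together with the first-order chain rule suffices, and I do not expect a genuine obstacle beyond this routine verification.
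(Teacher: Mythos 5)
Your proof is correct and takes essentially the same route as the paper's: the representation $\Psi_t = \sum_\mu \Psi_t^\mu\, dh_\mu(X_t)$ from \cite[Lemma~7.56~(v)]{hackenbroch}, the Stratonovich form of the It\^o formulas (Corollary~\ref{ito1} resp.\ Corollary~\ref{ito2}) applied to the time-independent functions $h_\mu$, and resummation. One minor correction to your side remark: the extra term in the Riemann-horizontal It\^o formula \eqref{itoformula} is the vertical derivative $V_{\alpha\beta}f$ on $\F(M)$, not a $\partial g/\partial t$ acting on the time slot of $f$, and it is absent from part~2 of Corollary~\ref{ito2} because functions lifted from $M$ are constant on the fibers of $\F(M)$ --- time-independence of $h_\mu$ only removes the $\partial f/\partial t$ term; since you invoke the already-corrected formula for functions on $M$, this does not affect the validity of your argument.
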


\begin{proof}
  By \cite[Lemma~7.56~(v)]{hackenbroch} there exist $\ell \in \N$,
  real-valued adapted processes $\Psi^1, \ldots, \Psi^\ell$ and
  functions $h_1, \ldots, h_\ell \in C^\infty(M)$ such that $\Psi_t =
  \sum_{\nu=1}^\ell \Psi_t^\nu dh_\nu(X_t)$ for all $t \geq 0$. It
  follows that
  \begin{align*}
    \int_0^t \Psi_s ({}\circ dX_s)
    & = \sum_{\nu=1}^\ell \int_0^t (\Psi_s^\nu dh_\nu) ({}\circ dX_s)\\
    & = \sum_{\nu=1}^\ell \int_0^t \Psi_s^\nu \circ dh_\nu (X_s).
  \end{align*}
  Since by It\^o's formula (Corollary~\ref{ito1} resp.\
  Corollary~\ref{ito2})
  \[
  d h_\nu (X_s) 
  = \sum_{i=1}^d dh_\nu(U_s e_i) \circ d Z_s^i,
  \]
  the claim follows.
\end{proof}

\section{Martingales on manifolds with time-dependent connection}

\begin{proposition}[cf.\ {\cite[Satz~7.147~(i)]{hackenbroch}} for the case of a fixed connection]\label{marteq}
  Let $X$ be an $M$-valued semimartingale. Then the following
  conditions are equivalent:
  \begin{enumerate}
  \item The $(\nabla(t))_{t \geq 0}$-antidevelopment of $X$ is an
    $\R^d$-valued local martingale.

  \item For all smooth $f: M \to \R$ the process
    \[
    f(X_t) - f(X_0) - \frac{1}{2} \int_0^t \Hess^{\nabla(s)} \!\!
    f(dX_s, dX_s)
    \]
    is a real-valued local martingale.
  \end{enumerate}
  Moreover, in the Riemannian case, these conditions are equivalent to
  the condition that the $(g(t))_{t \geq 0}$-Riemann-antidevelopment
  of $X$ is an $\R^d$-valued local martingale.
\end{proposition}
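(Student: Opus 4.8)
The plan is to prove the equivalence of (1) and (2) first, then handle the Riemannian addendum. For the equivalence of (1) and (2), I would work directly from the It\^o formula in Corollary~\ref{ito1}~(2) (equivalently Corollary~\ref{ito3}). Applying it to a time-independent smooth $f\colon M\to\R$ gives
\[
df(X_t) = \sum_{i=1}^d (U_t e_i) f(X_t)\, dZ_t^i + \tfrac{1}{2}\Hess^{\nabla(t)}\! f(dX_t,dX_t),
\]
so that the process in (2) equals $f(X_0)+\sum_i\int_0^t (U_se_i)f(X_s)\,dZ_s^i$. The key structural point is that the finite-variation $dt$-terms in the It\^o expansion have been exactly absorbed into the Hessian integral, leaving only the stochastic integral against $Z$. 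Hence the process in (2) is a local martingale precisely when $\sum_i\int_0^t (U_se_i)f(X_s)\,dZ_s^i$ is, for every smooth $f$.

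For the implication (1)$\Rightarrow$(2): if $Z$ is a local martingale, then each $\int_0^t (U_se_i)f(X_s)\,dZ_s^i$ is a stochastic integral of a locally bounded adapted integrand against a local martingale, hence a local martingale, and so is the finite sum. For the converse (2)$\Rightarrow$(1), the idea is to recover each coordinate $Z^i$ by a suitable choice of test functions. Locally one chooses $f$ to be coordinate functions (or functions whose differentials at a point span the cotangent space), so that the integrands $(U_se_i)f(X_s)$ form an invertible matrix; inverting this relation expresses $\int_0^t\circ\,dZ^i$—and after controlling the It\^o correction, $Z^i$ itself—as a linear combination of the local martingales from (2). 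Since martingality is a local property, a standard localization argument using a countable cover of $M$ by coordinate charts together with stopping times patches these local conclusions into the global statement that $Z$ is an $\R^d$-valued local martingale.

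For the Riemannian addendum, I would invoke Remark~\ref{relpar}, which relates the two antidevelopments. Writing $Z$ for the $(\nabla(t))$-antidevelopment and $Z^{\Riem}$ for the $(g(t))$-Riemann-antidevelopment of the same $X$, the comparison of horizontal lifts there shows that $U_t^{-1}U_t^{\Riem}$ (equivalently the relevant change-of-frame process) differs from the identity only by a finite-variation term driven by $\partial g/\partial t$. Consequently the two antidevelopments differ by a process of finite variation: $dZ_t^{\Riem} - (\text{linear in }dZ_t)$ is a $dt$-term. Since adding or subtracting a continuous finite-variation process, or applying an invertible finite-variation linear transformation, preserves the local martingale property, $Z$ is a local martingale if and only if $Z^{\Riem}$ is.

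The main obstacle I anticipate is the converse direction (2)$\Rightarrow$(1): extracting the individual components of $Z$ requires choosing enough test functions to invert the frame relation, and doing so globally forces a careful localization over charts with the associated stopping-time bookkeeping. A secondary technical point is making the Riemannian comparison precise—confirming from Remark~\ref{relpar} that the discrepancy between $Z$ and $Z^{\Riem}$ is genuinely of finite variation and that the linear transformation relating them is invertible, so that the local martingale property transfers in both directions.
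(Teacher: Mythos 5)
Your proposal is correct, and for the equivalence of (1) and (2) it follows the paper's route exactly: both rest on the single identity from Corollary~\ref{ito3},
\[
f(X_t)-f(X_0)-\frac{1}{2}\int_0^t \Hess^{\nabla(s)}\!\! f(dX_s,dX_s)=\sum_{i=1}^d\int_0^t (U_se_i)f(X_s)\,dZ_s^i,
\]
which the paper compresses into the one-line remark that the right-hand side is a local martingale for all $f$ if and only if $Z$ is. What you add is precisely the part the paper leaves implicit: the converse (2)$\Rightarrow$(1) via test functions whose differentials span the cotangent space, inversion of the matrix $\bigl((U_se_i)f_j(X_s)\bigr)_{ij}$ (invertible since $U_s$ is a frame), and chart-by-chart localization with stopping times. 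One simplification there: no It\^o correction needs controlling, since the displayed identity is already an It\^o-integral relation, so associativity of the stochastic integral yields $Z^i$ directly as an integral against the local martingales from (2). Where you genuinely diverge is the Riemannian addendum. The paper handles it uniformly: Corollary~\ref{ito3} holds verbatim with $(U,Z)$ replaced by the Riemann-horizontal lift and Riemann-antidevelopment (the Hessian being the Levi-Civita one in either case), so all three conditions are tied to the same family of processes and no comparison of the two antidevelopments is needed. You instead transfer the local-martingale property directly between $Z$ and $Z^{\Riem}$ via Remark~\ref{relpar}: taking $\U=U$ in Proposition~\ref{riemhor2} gives $\gamma\equiv 0$, so the process $G$ of \eqref{gproc2} solves a random ODE, is $\GL_d(\R)$-valued and of finite variation, and $dZ_t^{\Riem}=G_t^{-1}\,dZ_t$ with Stratonovich equal to It\^o. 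Be aware that this relation is multiplicative, not additive: $Z^{\Riem}$ is the integral of the invertible finite-variation matrix $G^{-1}$ against $Z$, rather than $Z$ plus a $dt$-term, so your first formulation (``the two antidevelopments differ by a process of finite variation'') is loose, while your second (an invertible finite-variation linear transformation preserves local martingality, in both directions since $dZ_t=G_t\,dZ_t^{\Riem}$) is the correct mechanism. Your route buys an explicit structural link between the two antidevelopments; the paper's buys brevity and sidesteps any discussion of $G$.
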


\begin{definition}\label{martingale}
  $X$ is called a {\em $(\nabla(t))_{t \geq 0}$-martingale} if the
  equivalent conditions of Proposition~\ref{marteq} are satisfied.
\end{definition}

\begin{proof}[Proof of Proposition~\rm\ref{marteq}]
  Let $Z$ be the $(\nabla(t))_{t \geq 0}$-antidevelopment or
  $(g(t))_{t \geq 0}$-Riemann-anti\-deve\-lop\-ment of $X$, and $f \in
  C^\infty(M)$. Then by Corollary~\ref{ito3}
  \[
  f(X_t) - f(X_0) - \frac{1}{2} \int_0^t \Hess^{\nabla(s)} \!\!
  f(dX_s, dX_s) = \sum_{i = 1}^d \int_0^t (U_s e_i) f(X_s) d Z_s^i.
  \]
  This is a local martingale for all $f \in C^\infty(M)$ if and only
  if $Z$ is an $\R^d$-valued local martingale.
\end{proof}

\begin{proposition}[Local expression]\label{local}
  A semimartingale $X$ is a $(\nabla(t))_{t \geq 0}$-martingale if and
  only if in local coordinates
  \[
  dX_t^i = - \frac{1}{2} \sum_{jk} \Gamma_{jk}^i(t, X_t) d \langle
  X^j, X^k \rangle_t
  \]
  up to the differential of a local martingale.
\end{proposition}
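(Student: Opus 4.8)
The plan is to reduce the coordinate-free martingale characterization of Proposition~\ref{marteq}(ii) to a local-coordinate computation by testing against the coordinate functions themselves. First I would fix a chart and let $x^1, \ldots, x^d$ denote the coordinate functions on the chart's domain, so that $X_t^i = x^i(X_t)$. Since Proposition~\ref{marteq}(ii) must hold for \emph{all} smooth $f$, it holds in particular for (smooth extensions of) each coordinate function $x^i$; conversely, I would argue that if the stated local expression holds, then It\^o's formula recovers condition (ii) for arbitrary $f$. Thus the strategy is to show the two-sided implication by applying Corollary~\ref{ito3} to $f = x^i$ and reading off both the drift and the Hessian term in coordinates.

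The key computation is the local expression of the Hessian. For $f = x^i$ one has $\partial f/\partial t = 0$ and the coordinate-basis formula
\begin{equation*}
  \Hess^{\nabla(t)} x^i = -\sum_{j,k=1}^d \Gamma_{jk}^i(t,\cdot)\, dx^j \otimes dx^k,
\end{equation*}
since the second covariant derivative of a coordinate function reproduces the Christoffel symbols (the ordinary second partials of $x^i$ vanish). Substituting this into Corollary~\ref{ito3} gives
\begin{equation*}
  dX_t^i = \sum_{\ell=1}^d (U_t e_\ell)\, x^i(X_t)\, dZ_t^\ell
           - \frac{1}{2}\sum_{j,k=1}^d \Gamma_{jk}^i(t,X_t)\, d\langle X^j, X^k\rangle_t,
\end{equation*}
where the first sum on the right is precisely a stochastic integral against the local martingale $Z$ and is therefore itself a local martingale. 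Hence $dX_t^i$ agrees with $-\tfrac12 \sum_{jk}\Gamma_{jk}^i(t,X_t)\, d\langle X^j,X^k\rangle_t$ up to a local-martingale differential, which is the asserted form.

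For the converse direction I would run the argument in reverse: assuming the local expression, the decomposition above shows that for each coordinate function the ``compensated'' process of condition (ii) is a local martingale, and then a standard argument extends this from coordinate functions to arbitrary $f \in C^\infty(M)$ via It\^o's formula, since $\Hess^{\nabla(t)}$ and the quadratic-variation term transform correctly under composition. The main subtlety, and the step I would be most careful about, is the identification of the Hessian of a coordinate function with the Christoffel symbols in the \emph{time-dependent} setting: one must confirm that the connection $\nabla(t)$ enters only through its instantaneous Christoffel symbols $\Gamma_{jk}^i(t,\cdot)$ at the current time, with no extra term arising from the $t$-dependence, which is legitimate here because the Hessian in Corollary~\ref{ito3} is taken with respect to $\nabla(t)$ at the fixed time $t$ appearing in the integrand. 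The remaining identity $\sum_i (U_t e_\ell) x^i(X_t)\, dZ_t^\ell$ being a local martingale is immediate from the local-martingale property of $Z$, so no further integrability analysis is needed.
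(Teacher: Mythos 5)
Your proof is correct and is essentially the paper's own argument: the paper simply remarks that the statement ``can be proved in the same way as in the case of a fixed connection'' and cites \'Emery's Proposition~3.7, whose proof is exactly your computation --- testing Proposition~\ref{marteq}(ii) against (extensions of) coordinate functions, using $\Hess^{\nabla(t)} x^i = -\sum_{j,k}\Gamma^i_{jk}(t,\cdot)\,dx^j\otimes dx^k$ in Corollary~\ref{ito3}, and reversing via the coordinate It\^o formula. Your added observation that the time dependence enters only through the instantaneous Christoffel symbols $\Gamma^i_{jk}(t,\cdot)$, with no extra term from $t$-differentiation since the coordinate functions are time-independent, is precisely the point that justifies carrying the fixed-connection proof over verbatim.
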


\begin{proof}
  This can be proved in the same way as in the case of a fixed
  connection (see e.g.\ \cite[Proposition~3.7]{emery}).
\end{proof}

\begin{example}\label{ex}
  Let $M = \R$ equipped with the standard metric $g_0$, and let $u$ be
  a strictly positive smooth function on $\R_+ \times \R$. Define the
  metric $g(t, \cdot)$ by $g(t,x) = u(t,x) g_0(x)$, and let
  $\nabla(t)$ be its Levi-Civita connection. Let $b$ and $\sigma$ be
  smooth functions on $\R_+ \times \R$, and $X$ the solution of the
  SDE
  \[
  dX_t = b(t,X_t)dt + \sigma(t,X_t) dW_t,
  \]
  where $W$ is a standard one-dimensional Brownian motion. Then $X$ is
  a $(\nabla(t))_{t \geq
    0}$-martingale 
  if and only if
  \[
  b = - \frac{u' \sigma^2}{4u}
  \]
  on $\{(t, X_t) \, | \, t \geq 0\}$ (the prime denotes
  differentiation with respect to~$x$).
\end{example}

\begin{proof}
  Taking into account that the unique Christoffel symbol of
  $\nabla(t)$ equals $u'/(2u)$, the claim follows immediately from
  Proposition~\ref{local}.
\end{proof}

\section{Convergence of martingales}
\subsection{Local convergence}
\begin{proposition}[cf.\ {\cite[Lemma~7.187]{hackenbroch} or
    \cite[Theorem~2.5.6]{hsu}} for the case of a fixed
  connection]\label{locconv}
  Let $U \subseteq M$ be an open subset with the following property:
  There exists a smooth function $\varphi = (\varphi^1, \ldots,
  \varphi^d): M \to \R^d$ such that
  \begin{itemize}
  \item $\varphi|_U$ is bounded,

  \item $\varphi|_U$ is a diffeomorphism onto its image, and

  \item $\Hess^{\nabla(t)} \varphi^i(x) \geq 0$ for all $i \in \{1,
    \ldots, d\}$, all $x \in U$ and all $t \geq 0$.
  \end{itemize}
  Then each $(\nabla(t))_{t \geq 0}$-martingale $X$ converges almost
  surely on the set $\Omega_0 := \{X \mbox{ lies eventually }$
  $\mbox{in } U\}$.
\end{proposition}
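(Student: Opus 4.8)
The plan is to combine the martingale characterization of Proposition~\ref{marteq} with the uniform convexity of the coordinate functions $\varphi^i$. For each $i$, applying Corollary~\ref{ito3} (equivalently Proposition~\ref{marteq}) to $\varphi^i\in C^\infty(M)$ yields
\[
\varphi^i(X_t)=\varphi^i(X_0)+N^i_t+\tfrac12\int_0^t \Hess^{\nabla(s)}\varphi^i(dX_s,dX_s),
\]
with $N^i$ a real local martingale. The hypothesis $\Hess^{\nabla(t)}\varphi^i\ge 0$ for \emph{all} $t\ge 0$ is exactly what makes the finite-variation part nondecreasing whenever $X$ sits in $U$, no matter which connection $\nabla(s)$ is currently acting; this uniform-in-$t$ assumption replaces the single convexity inequality of the fixed-connection case, so the time-dependence of the Hessian introduces no new difficulty at this stage.

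Next I would localize in order to turn the qualitative event $\Omega_0$ into a bounded submartingale. Since $\Omega_0=\bigcup_{n\in\N}A_n$ with $A_n:=\{X_t\in U\text{ for all }t\ge n\}$ increasing in $n$, it suffices to prove convergence on each $A_n$. For fixed $n$, set $\tau:=\inf\{t\ge n:X_t\notin U\}$, so that $\tau=+\infty$ on $A_n$, and consider $\varphi^i(X_{t\wedge\tau})$ for $t\ge n$. Along $[n,\tau)$ the path stays in $U$, so the finite-variation part is nondecreasing there, while $\varphi^i(X_{t\wedge\tau})$ is bounded because $\varphi|_U$ is bounded (and $|\varphi^i|$ inherits the same bound on $\overline U$ by continuity). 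Thus $\varphi^i(X_{\cdot\wedge\tau})$ is a bounded process of the form local martingale plus nondecreasing process, whose local-martingale part, being bounded above, converges a.s.\ (apply nonnegative supermartingale convergence to a constant minus it); this forces the increasing part to have a finite limit as well. Hence $\varphi^i(X_{t\wedge\tau})$ converges a.s., so on $A_n$ the coordinate $\varphi^i(X_t)$ converges; letting $n\to\infty$ and collecting the $d$ coordinates, $\varphi(X_t)$ converges a.s.\ on $\Omega_0$ to some limit $y\in\R^d$.

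It then remains to pass from convergence of $\varphi(X_t)$ to convergence of $X_t$ using that $\varphi|_U$ is a diffeomorphism onto its (open) image, and this recovery is the step I expect to be the main obstacle. On $\Omega_0$ the path is eventually in $U$, and, $X$ being continuous, its set $L$ of limit points is connected, contained in $\overline U$, and satisfies $\varphi\equiv y$ on $L$. If $y\in\varphi(U)$ this is immediate, since then $X_t=(\varphi|_U)^{-1}(\varphi(X_t))\to(\varphi|_U)^{-1}(y)\in U$. The genuinely delicate case is $y\in\partial\varphi(U)$: a priori $L$ could then lie entirely in $\partial U$, with $X_t$ approaching the boundary while $\varphi(X_t)$ still converges, so that convergence of $\varphi(X_t)$ alone need not force convergence of $X_t$. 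I would rule this out exactly as in the fixed-connection references \cite[Lemma~7.187]{hackenbroch} and \cite[Theorem~2.5.6]{hsu}, exploiting the injectivity of $\varphi$ on $U$ together with the connectedness of $L$ (and, where needed, relative compactness of $U$) to force $L$ to reduce to a single point, whence $X$ converges a.s.\ on $\Omega_0$.
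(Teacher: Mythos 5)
Your proposal follows the paper's proof essentially step for step: the same decomposition $\varphi^i(X_t)=\varphi^i(X_0)+M^i_t+A^i_t$ with the finite-variation part eventually nondecreasing on $\Omega_0$, boundedness of $\varphi^i|_U$ forcing the local martingale part to be bounded above and hence convergent (the paper argues via the time-changed Brownian motion representation, you via nonnegative supermartingale convergence --- both standard), whence $A^i$ and then $\varphi(X)$ converge, and finally $X$ converges via the diffeomorphism property; your stopping-time localization over the events $A_n$ is just a more explicit rendering of the paper's ``eventually non-decreasing''. The boundary case $y\in\partial\varphi(U)$ that you flag as the main remaining obstacle is exactly the step the paper dispatches in a single clause (``since $\varphi|_U$ is a diffeomorphism onto its image, so does the process $X$''), so your deferral to the fixed-connection references there is, if anything, more careful than the published argument rather than a gap relative to it.
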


\begin{remark}
  In the case of a fixed connection each point $x \in M$ has a
  neighbourhood $U$ with that property (see e.g.\
  \cite[Lemma~7.187]{hackenbroch} or \cite[Theorem~2.5.6]{hsu}).
\end{remark}

\begin{proof}[Proof of Proposition~\rm\ref{locconv}]
  By Definition~\ref{martingale} for each $i \in \{1, \ldots, d\}$
  there exists a real-valued local martingale $M^i$ such that
  \[
  \varphi^i(X_t) = \varphi^i(X_0) + M_t^i + A_t^i,
  \]
  where $A_t^i := \int_0^t \Hess^{\nabla(s)} \!\! \varphi^i(dX_s,
  dX_s)$.

  Since $\Hess^{\nabla(s)} \varphi^i \geq 0$ on $U$, the process $A$
  is eventually non-decreasing and in particular bounded from below on
  $\Omega_0$. Since $\varphi^i|_U$ is bounded, it follows that the
  local martingale $M^i$ is bounded from above and hence convergent on
  $\Omega_0$ (because it is a time-changed Brownian motion). This
  implies that the process $A^i$ is bounded and hence convergent on
  $\Omega_0$ (since it is eventually non-decreasing). Consequently,
  the process $\varphi^i(X)$ converges on $\Omega_0$, and, since
  $\varphi|_U$ is a diffeomorphism onto its image, so does the process
  $X$.
\end{proof}

\subsection{Darling-Zheng}
An important result of martingale theory in the case of a fixed
connection is the convergence theorem of Darling and Zheng (see e.g.\
\cite[Satz~7.190]{hackenbroch}): Let $X$ be an $M$-valued martingale
with respect to a fixed connection $\nabla$, and $g_0$ an arbitrary
Riemannian metric on $M$. Then
\[
\{ X_\infty \mbox{ exists in } M \} \subset \{ \mbox{$\int_0^\infty$}
g_0(dX_s, dX_s) < \infty \} \subset \{ X_\infty \mbox{ exists in }
\hat M \},
\]
where $\hat M$ is the Alexandrov compactification of $M$. In the case
of a time-dependent connection at least the second inclusion does not
hold. To see this consider the following example:

\begin{example}
  In the situation of Example~\ref{ex} take $u(t,x) = \exp(a(t)x)$,
  $\sigma(t,x) = \sigma(t)$ and $b(t,x) = -\frac{1}{4} a(t)
  \sigma^2(t)$ with smooth functions $a, \sigma: \R_+ \to \R$. Then
  $X$ is a $(\nabla(t))_{t \geq 0}$-martingale, and
  \[
  X_t = X_0 - \frac{1}{4} \int_0^t a(s) \sigma(s)^2 ds + \int_0^t
  \sigma(s) dW_s,
  \]
  so that $\int_0^t g_0(dX_s, dX_s) = \int_0^t \sigma(s)^2 ds$.  If
  $\sigma$ is chosen in such a way that $\int_0^\infty \sigma(s)^2 ds
  < \infty$, then $\int_0^\infty g_0(dX_s, dX_s) < \infty$, but the
  function~$a$ (being arbitrary) can be chosen in such a way that $X$
  does not converge in $\hat \R$.
\end{example}

In the Riemannian case one might hope that $\{ \mbox{$\int_0^\infty$}
g(s)(dX_s, dX_s) < \infty \} \subset \{ X_\infty \mbox{ exists in }
\hat M \}$.  This however turns out to be wrong as well:
\begin{example}
  In the situation of Example~\ref{ex} take $u(t,x) = u(t)$,
  $\sigma(t,x) \equiv 1$ and $b(t,x) \equiv 0$. Then $X$ is a
  $(\nabla(t))_{t \geq 0}$-martingale, and
  \[
  X_t = X_0 + W_t,
  \]
  so that $\int_0^t g(s)(dX_s, dX_s) = \int_0^t u(s) ds$.  If $u$ is
  chosen in such a way that $\int_0^\infty u(s) ds < \infty$, then
  $\int_0^\infty g(s)(dX_s, dX_s) < \infty$, but obviously $X$ does
  not converge in $\hat \R$.
\end{example}

\section{Uniqueness of martingales with given terminal value}

\begin{proposition}[cf.~{\cite[Lemma~7.204]{hackenbroch}} for the case of a fixed connection]\label{convex}
  Let $M_0$ be a submanifold of $M$ which is totally geodesic in $M$
  with respect to $\nabla(t)$ for all $t$. Then for each $x_0 \in M$
  and each $T \geq 0$ there exist an open neighbourhood $V$ of $x_0$
  in $M$ and a non-negative function $f \in C^\infty(V)$ satisfying
  \[
  f(x) = 0 \Longleftrightarrow x \in M_0
  \]
  and
  \begin{equation}\label{conv}
    \Hess^{\nabla(s)} f(x) \geq 0 
  \end{equation}
  for all $s \in [0,T]$ and all $x \in V$.
\end{proposition}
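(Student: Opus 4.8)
The plan is to build $f$ explicitly in coordinates adapted to $M_0$ and to read off \eqref{conv} from the defining property of total geodesy, using compactness of $[0,T]$ to make every estimate uniform in $s$. First I would dispose of the case $x_0\notin M_0$: one takes $V$ disjoint from $M_0$ and $f\equiv 1$, whose Hessian vanishes and whose zero set is empty, so the stated equivalence holds vacuously. Hence assume $x_0\in M_0$ and put $k=\dim M_0$. Choose a chart around $x_0$ in which $M_0\cap V=\{x^{k+1}=\dots=x^d=0\}$; I write tangent indices $i,j\in\{1,\dots,k\}$, normal indices $\alpha,\beta\in\{k+1,\dots,d\}$, and set $p:=\sum_\alpha(x^\alpha)^2$ and $q:=\sum_i(x^i)^2$. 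Since geodesics and the quadratic form in \eqref{conv} depend only on the symmetrized Christoffel symbols $\Gamma^l_{(ij)}$, total geodesy of $M_0$ for $\nabla(s)$ forces $\Gamma^\alpha_{(ij)}(s,\cdot)=0$ on $M_0$ for all tangent $i,j$; by Hadamard's lemma each such symbol is $O(p^{1/2})$ near $M_0$, and by smoothness in $(s,x)$ together with compactness of $[0,T]$ all symmetrized Christoffel symbols and the remainders in this vanishing are bounded uniformly in $s\in[0,T]$ on $\bar V$.

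The candidate is $f(x):=p\,(1+Kq)$ with a large constant $K>0$ to be fixed. It is smooth and non-negative, and since $1+Kq\ge 1$ it vanishes exactly where $p=0$, i.e.\ precisely on $M_0\cap V$; this secures the zero-set condition for every $K$. To verify \eqref{conv} I would use $\Hess^{\nabla(s)}f_{ij}=\partial_i\partial_j f-\sum_l\Gamma^l_{(ij)}(s,x)\,\partial_l f$ and first record the behaviour of the unmodified function $p$: its Hessian is $2I+O(p^{1/2})$ on the normal block, has tangent--normal entries of size $O(p^{1/2})$, but on the tangent--tangent block it equals $-2\sum_\alpha\Gamma^\alpha_{(ij)}x^\alpha=O(p)$ (using $\Gamma^\alpha_{(ij)}=O(p^{1/2})$ and $x^\alpha=O(p^{1/2})$). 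Thus $\Hess^{\nabla(s)}p$ is uniformly positive in the normal directions but degenerates in the tangent directions, where it is merely of size $p$ and, crucially, of \emph{a priori indefinite sign}. This tangent degeneracy is exactly where the naive squared distance fails, and it is the main obstacle.

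The role of the factor $1+Kq$ is to inject a genuinely positive contribution of the same order $p$ into the tangent block. A direct computation gives that the tangent--tangent block of $\Hess^{\nabla(s)}(pq)$ equals $2p\,\delta_{ij}$ plus terms that are $o(p)$ as $|x|\to 0$, while its normal block is $O(|x|^2)$ and its tangent--normal block is $O(p^{1/2}|x|)$. Hence for $f=p(1+Kq)$ the tangent block is bounded below by $(2K-C_1)p\,I$ once $V$ is small, the normal block by $(2-C_2|x|)I$, and the cross block is $O(p^{1/2}|x|)\,(1+Kq^{1/2})$, with $C_1,C_2$ independent of $K$ and uniform in $s\in[0,T]$. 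Evaluating $v^\top\Hess^{\nabla(s)}f\,v$ on $v=(v^{\mathrm t},v^{\mathrm n})$ and absorbing the cross terms by Young's inequality, the remaining work is bookkeeping: I would first choose $K$ so large that $2K-C_1$ dominates the fixed constants coming from the $p$- and cross-contributions, and then shrink $V$ so that $|x|$ and $Kq$ are small (e.g.\ $Kq\le 1$), keeping the normal block above $I$ and leaving the tangent coefficient positive after absorption. Since every constant has been taken uniform over the compact interval $[0,T]$, the resulting bound $\Hess^{\nabla(s)}f\ge 0$ holds simultaneously for all $s\in[0,T]$ and all $x\in V$, which is \eqref{conv}.
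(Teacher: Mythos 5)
Your proposal is correct and takes essentially the same route as the paper: your candidate $f=p(1+Kq)$ is, up to the positive factor $c/2$ with $c=1/K$, exactly the paper's function $\tfrac12(c+|\tilde x|^2)|\hat x|^2$, and both arguments rest on the same ingredients --- adapted coordinates, the bound $|\Gamma^\alpha_{(ij)}(s,x)|\leq C|\hat x|$ obtained from total geodesy plus compactness of $[0,T]$, and a block analysis of the coordinate Hessian in which your Young-inequality absorption plays the role of the paper's rescaled block matrix $H^*$ tending to $\bigl(\begin{smallmatrix} I & 0\\ 0 & cI\end{smallmatrix}\bigr)$. One cosmetic slip: your summarized cross-block bound $O(p^{1/2}|x|)(1+Kq^{1/2})$ should read $O\bigl(p^{1/2}(1+K|x|)\bigr)$, since (as your own earlier line correctly records) the tangent--normal entries of $\Hess^{\nabla(s)}p$ are only $O(p^{1/2})$; this does not affect the conclusion, because the squared cross term is then $O(p)(1+K|x|)^2$, which the tangent block of order $Kp$ still dominates after choosing $K$ large and then shrinking $V$ so that $K|x|\leq 1$.
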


\begin{proof}
  Let $d_0 := \dim M_0$.  Choose coordinates $x_1, \ldots, x_d$ for
  $M$ on a neighbourhood $O$ of $x_0$ in such a way that
  \begin{equation}\label{subm}
    O \cap M_0 = O \cap \{x_{d_0+1} = \ldots = x_d = 0\}.
  \end{equation}
  We will show that for sufficiently small $c > 0$ the function
  \[
  f(x) := \frac{1}{2} (c+ |\tilde x|^2) |\hat x|^2,
  \]
  where $\tilde x := (x_1, \ldots, x_{d_0})$ and $\hat x :=
  (x_{d_0+1}, \ldots, x_d)$, does the job on a possibly smaller
  neighbourhood $V$ of $x_0$. All we have to show is that \eqref{conv}
  holds provided one chooses $c$ and $V$ small enough.

  Let $\Gamma_{ij}^k(s,x)$ be the Christoffel symbols with respect to
  $\nabla(s)$. Since $M_0$ is totally goedesic and because of
  \eqref{subm} one has
  \[
  \Gamma_{ij}^k(s,x) = 0, \quad i,j \leq d_0, \quad k \geq d_0+1
  \]
  for all $s \geq 0$ and all $x \in O \cap M$.  By the compactness of
  $[0,T]$ this implies the existence of a constant $C < \infty$ such
  that
  \begin{equation}\label{christoffel}
    |\Gamma_{ij}^k(s,x)| \leq C |\hat x|, \quad i,j \leq d_0, \quad k \geq d_0+1.
  \end{equation}
  Since
  \[
  \Hess^{\nabla(s)} f(x) = \sum_{i,j=1}^d H_{ij}(s,x)\, dx_i \otimes
  dx_j,
  \]
  where
  \[
  H_{ij}(s,x) := \frac{\partial^2 f}{\partial x_i \partial x_j}(x) -
  \sum_{k=1}^d \Gamma_{ij}^k(s,x) \frac{\partial f}{\partial x_k}(x) ,
  \]
  it suffices to show that the matrix $H(s,x)$ is positive definite
  for all $s \in [0,T]$ and all $x \in V \setminus M_0$, provided that
  $c$ and $V$ are chosen small enough.  Using the decomposition of
  $\{1, \ldots, d\}$ into $I = \{1, \ldots, d_0\}$ and $J = \{d_0+1,
  \ldots, d\}$, this is true if and only if the same statement holds
  for the block matrix $H^*(s,x)$ defined by
  \begin{align*}
    H^*(s,x) :=
    \begin{pmatrix}
      \displaystyle\frac{1}{|\hat x|^2}(H_{ij}(s,x))_{(i,j) \in I \times I} & \displaystyle\frac{1}{|\hat x|}(H_{ij}(s,x))_{(i,j) \in I \times J}\\
      \displaystyle\frac{1}{|\hat x|}(H_{ij}(s,x))_{(i,j) \in J \times
        I} & (H_{ij}(s,x))_{(i,j) \in J \times J}
    \end{pmatrix}.
  \end{align*}
  Since
  \[
  \frac{\partial f}{\partial x_k}(x) = \left\{
    \begin{array}{ll}
      x_k |\hat x|^2 & \qquad 1 \leq k \leq d_0,\\
      x_k (c + |\tilde x|^2) & \qquad d_0+1 \leq k \leq d,
    \end{array}
  \right.
  \]
  and
  \[
  \frac{\partial^2 f}{\partial x_i \partial x_j}(x) = \left\{
    \begin{array}{ll}
      \delta_{ij} |\hat x|^2 & \qquad 1 \leq i,j \leq d_0,\\
      2 x_i x_j & \qquad  1 \leq i \leq m \mbox{ and } d_0+1 \leq j \leq n,\\
      \delta_{ij} (c + |\tilde x|^2) & \qquad m+1 \leq i,j \leq n,
    \end{array}
  \right.
  \]
  it is easy to see (using \eqref{christoffel}) that on $[0,T] \times
  (V \setminus M)$ the matrix $H^*(s,x)$ is arbitrarily close to the
  (obviously positive definite) block matrix
  \begin{align*}
    \begin{pmatrix}
      I & 0\\
      0 & cI
    \end{pmatrix}
  \end{align*}
  provided that $c$ and $V$ are chosen small enough.
\end{proof}

\begin{corollary}\label{martsub}
  Let $M_0$ be submanifold of $M$ which is totally geodesic in $M$
  with respect to $\nabla(t)$ for all $t$.  Then given $T > 0$ each
  point $x_0 \in M_0$ has an open neighbourhood $V$ in $M$ with the
  following property: If $X$ is a $V$-valued $(\nabla(t))_{t \geq
    0}$-martingale such that a.s.\ $X_T \in M_0$, then a.s.\ $X_t \in
  M_0$ for all $t \in [0,T]$.
\end{corollary}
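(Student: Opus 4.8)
\emph{Proof proposal.} The plan is to use the convex function supplied by Proposition~\ref{convex} as a Lyapunov function for $M_0$ and to turn the total geodesy of $M_0$ into a submartingale property for $f(X)$.

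First I would apply Proposition~\ref{convex} to the given point $x_0 \in M_0$ and the given $T$, obtaining an open neighbourhood $V_0$ of $x_0$ and a non-negative $f \in C^\infty(V_0)$ with $f(x) = 0 \Leftrightarrow x \in M_0$ on $V_0$ and $\Hess^{\nabla(s)} f \geq 0$ on $V_0$ for all $s \in [0,T]$. I would then shrink to a relatively compact open $V$ with $x_0 \in V$ and $\overline V \subset V_0$, so that $f$ is bounded on $V$; extending $f$ to a function $\tilde f \in C^\infty(M)$ that coincides with $f$ on a neighbourhood of $\overline V$ lets me invoke the global It\^o formula and the martingale characterisation, since a $V$-valued process never sees the values of $\tilde f$ outside $\overline V$ and hence, along $X$, $\tilde f$ and all its derivatives agree with those of $f$.

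Next, for a $V$-valued $(\nabla(t))_{t \geq 0}$-martingale $X$, I would apply the It\^o formula of Corollary~\ref{ito3} to the ($t$-independent) function $f$. Because $X$ is a $(\nabla(t))_{t \geq 0}$-martingale, the $dZ$-term is the differential of a local martingale, so that
\[
f(X_t) = f(X_0) + N_t + \tfrac12 \int_0^t \Hess^{\nabla(s)} f(dX_s, dX_s),
\]
with $N$ a continuous local martingale and $N_0 = 0$. Since $X$ stays in $V$ and $\Hess^{\nabla(s)} f \geq 0$ there for all $s \in [0,T]$, the integral term is non-decreasing on $[0,T]$; hence $f(X)$ is a continuous local submartingale on $[0,T]$. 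As $f$ is bounded on $V$, the process $f(X)$ is bounded, and a bounded continuous local submartingale is a genuine submartingale (localise $N$, use optional stopping for the martingale part and monotone resp.\ bounded convergence to remove the localisation).

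Finally, the submartingale property gives $\mathbb{E}[f(X_t)] \leq \mathbb{E}[f(X_T)]$ for every $t \in [0,T]$. By hypothesis $X_T \in M_0$ almost surely, so $f(X_T) = 0$ a.s.\ and therefore $\mathbb{E}[f(X_t)] \leq 0$; since $f \geq 0$ this forces $f(X_t) = 0$ a.s., i.e.\ $X_t \in M_0$ a.s., for each fixed $t \in [0,T]$. Applying this to a countable dense set of times and using the continuity of $t \mapsto f(X_t)$ upgrades the conclusion to: almost surely $X_t \in M_0$ for all $t \in [0,T]$. The only genuinely delicate point is the passage from a local to a true submartingale, which is exactly where the relative compactness of $V$ (hence boundedness of $f$) enters; the remaining steps are the standard It\^o computation and a routine continuity argument.
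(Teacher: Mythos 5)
Your proposal is correct and follows essentially the same route as the paper: the paper's proof also takes $V$ and $f$ from Proposition~\ref{convex}, observes that $f(X)$ is a non-negative submartingale with $f(X_T)=0$ a.s., and concludes $f(X)\equiv 0$ on $[0,T]$. You merely make explicit the routine details the paper leaves implicit (shrinking $V$ to ensure boundedness of $f$, extending $f$ globally to invoke the It\^o formula, and the localisation argument upgrading the local submartingale to a true one), all of which are sound.
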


\begin{proof}
  Choose $V$ and $f$ as in Proposition~\ref{convex}. Then $f(X)$ is a
  non-negative submartingale with $f(X_T) = 0$~a.s., hence $f(X)
  \equiv 0$~a.s.\ on $[0,T]$.
\end{proof}

\begin{corollary}[Uniqueness of $(\nabla(t))_{t \geq 0}$-martingales with given terminal value] 
  Given $T > 0$ each point $x \in M$ has an open neighbourhood $V$
  with the following property: If $X$ and $Y$ are two $V$-valued
  $(\nabla(t))_{t \geq 0}$-martingales such that a.s.\ $X_T = Y_T$,
  then a.s.\ $X_t = Y_t$ for all $t \in [0,T]$.
\end{corollary}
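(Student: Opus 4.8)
The plan is to reduce the statement to Corollary~\ref{martsub} by passing to the product manifold. Equip $M \times M$ with the family of product connections $\nabla(t) \oplus \nabla(t)$, $t \geq 0$, and let $M_0 := \{(x,x) : x \in M\}$ be the diagonal. First I would verify that $M_0$ is totally geodesic in $M \times M$ with respect to $\nabla(t) \oplus \nabla(t)$ for every $t$. This follows at once from the geodesic description of the product connection: a curve $s \mapsto (\gamma_1(s), \gamma_2(s))$ is a geodesic of $\nabla(t) \oplus \nabla(t)$ if and only if each $\gamma_i$ is a $\nabla(t)$-geodesic. A geodesic issuing from a point $(p,p) \in M_0$ tangent to $M_0$ has initial velocity of the form $(v,v)$, so $\gamma_1$ and $\gamma_2$ are $\nabla(t)$-geodesics with the same initial position and velocity; hence $\gamma_1 = \gamma_2$ and the geodesic remains in $M_0$.

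Next I would show that whenever $X$ and $Y$ are $(\nabla(t))_{t \geq 0}$-martingales on $M$, the pair $(X,Y)$ is a $(\nabla(t) \oplus \nabla(t))_{t \geq 0}$-martingale on $M \times M$. This is a direct application of the local expression in Proposition~\ref{local}. In product coordinates the Christoffel symbols of $\nabla(t) \oplus \nabla(t)$ are block-diagonal: the two diagonal blocks coincide with the Christoffel symbols $\Gamma_{jk}^i(t,\cdot)$ of $\nabla(t)$, while all mixed symbols (those carrying one index from each factor) vanish. Consequently the defining equation for $(X,Y)$ in Proposition~\ref{local} decouples into the corresponding equation for $X$ and for $Y$ separately, each of which holds up to the differential of a local martingale precisely because $X$ and $Y$ are $(\nabla(t))_{t \geq 0}$-martingales.

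Finally, given $T > 0$ and $x \in M$, I would apply Corollary~\ref{martsub} to the point $(x,x) \in M_0 \subset M \times M$ (using the product connection), obtaining an open neighbourhood $W$ of $(x,x)$ in $M \times M$ such that any $W$-valued product martingale whose value at time $T$ lies in $M_0$ stays in $M_0$ throughout $[0,T]$. Then I would choose an open neighbourhood $V$ of $x$ in $M$ with $V \times V \subseteq W$. If $X$ and $Y$ are $V$-valued $(\nabla(t))_{t \geq 0}$-martingales with $X_T = Y_T$ a.s., then $(X,Y)$ is a $(V \times V)$-valued product martingale with $(X_T, Y_T) \in M_0$ a.s., so Corollary~\ref{martsub} yields $(X_t, Y_t) \in M_0$, i.e.\ $X_t = Y_t$, for all $t \in [0,T]$ a.s.

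I expect the only genuinely substantive point to be the verification that the pair of martingales is again a martingale on the product: one must confirm that the cross quadratic-variation terms $d\langle X^j, Y^k\rangle$ pair only with the mixed Christoffel symbols, which vanish, so that no coupling between the two factors survives in the local martingale equation. Everything else reduces to the geodesic characterisation of the product connection and to a direct invocation of Corollary~\ref{martsub}, so this argument parallels the classical fixed-connection proof (cf.\ the diagonal/coupling method) essentially verbatim.
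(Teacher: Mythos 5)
Your proposal is correct and follows exactly the paper's route: the paper's entire proof is the one-line instruction to apply Corollary~\ref{martsub} to the diagonal embedding of $M$ into $M \times M$ with the product connections, and you have simply filled in the details that the paper leaves implicit (the diagonal is totally geodesic for the product connection, the pair $(X,Y)$ is a product-connection martingale by the local expression of Proposition~\ref{local}, and one shrinks the neighbourhood to a product $V \times V$). All of these verifications are sound, so your argument is a faithful, fully worked-out version of the paper's proof.
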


\begin{proof}
  Apply Corollary~\ref{martsub} to the diagonal embedding of $M$ into
  $M \times M$ equipped with the product connections $\nabla(t)
  \otimes \nabla(t)$.
\end{proof}

\section{Behaviour of semimartingales under
  maps} 
\begin{proposition}[cf.~{\cite[Satz~7.156]{hackenbroch}} for the case
  of fixed connections]\label{transf}
  Let $N$ be another manifold, also equipped with a smooth family of
  connections $(\nablaa(t))_{t \geq 0}$, and let $f \in
  C^\infty({\R_+\times M}, N)$. Let $X$ be a semimartingale on $M$,
  $U$ a $(\nabla(t))_{t \geq 0}$-horizontal or $(g(t))_{t \geq
    0}$-Riemann-horizontal lift of $X$, and $Z$ the corresponding
  $(\nabla(t))_{t \geq 0}$-antidevelopment or $(g(t))_{t \geq
    0}$-Riemann-anti\-deve\-lop\-ment.  Moreover, let $\U$ be a
  $(\nablaa(t))_{t \geq 0}$-horizontal or $(\tilde g(t))_{t \geq
    0}$-Riemann-horizontal lift of the image process $\X_t := f(t,
  X_t)$, and $\Z$ the corresponding $(\nablaa(t))_{t \geq
    0}$-antidevelopment or $(\tilde g(t))_{t \geq
    0}$-Riemann-anti\-de\-ve\-lop\-ment.  Then the following formula
  holds:
  \begin{equation}\label{FormulaZ}
    d\Z_t = \U_t^{-1} \frac{\partial f}{\partial t}(t, X_t) dt + \U_t^{-1} df U_t d Z_t + \frac{1}{2} \U_t^{-1} \Hess^{\nabla(t), \nablaa(t)} \!\! f (t, X_t)(dX_t, dX_t).
  \end{equation}
\end{proposition}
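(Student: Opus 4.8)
The plan is to reduce \eqref{FormulaZ} to a coordinate computation built on the scalar It\^o formula of Corollary~\ref{ito3}. The decisive point is that Corollary~\ref{ito3} holds in exactly the same clean form whether $U$ is $(\nabla(t))_{t\geq0}$-horizontal or $(g(t))_{t\geq0}$-Riemann-horizontal (the vertical correction terms act trivially on functions pulled back from the base), and likewise on $N$; this is what allows a single argument to cover all four combinations of horizontality on $M$ and $N$ simultaneously. Since \eqref{FormulaZ} is a local identity, I would first localize by stopping times so that $X$ stays in a chart $(x^i)_{i=1}^d$ of $M$ and $\X=f(\cdot,X)$ stays in a chart $(y^\gamma)_{\gamma=1}^n$ of $N$, and establish the formula on each such interval.

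First I would apply (the analogue on $N$ of) Corollary~\ref{ito3} to the time-independent coordinate functions $y^\gamma$, using the semimartingale $\X$, its horizontal lift $\U$ and its antidevelopment $\Z$. Writing $\tilde\Gamma^\gamma_{\mu\nu}(t,\cdot)$ for the Christoffel symbols of $\nablaa(t)$ and noting $(\U_t e_\alpha)y^\gamma=(\U_t)^\gamma{}_\alpha$ together with $\Hess^{\nablaa(t)}y^\gamma=-\sum_{\mu\nu}\tilde\Gamma^\gamma_{\mu\nu}(t,\cdot)\,dy^\mu\otimes dy^\nu$, this yields
\[
d\X_t^\gamma=\sum_\alpha(\U_t)^\gamma{}_\alpha\,d\Z_t^\alpha-\tfrac12\sum_{\mu\nu}\tilde\Gamma^\gamma_{\mu\nu}(t,\X_t)\,d\langle\X^\mu,\X^\nu\rangle_t,
\]
which I would invert, the frame $\U_t$ being invertible, to obtain
\[
d\Z_t^\alpha=\sum_\gamma(\U_t^{-1})^\alpha{}_\gamma\Big[d\X_t^\gamma+\tfrac12\sum_{\mu\nu}\tilde\Gamma^\gamma_{\mu\nu}(t,\X_t)\,d\langle\X^\mu,\X^\nu\rangle_t\Big].
\]

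Next I would apply Corollary~\ref{ito3} on $M$ to the scalar functions $f^\gamma:=y^\gamma\circ f$ on $\R_+\times M$, giving
\[
d\X_t^\gamma=\frac{\partial f^\gamma}{\partial t}(t,X_t)\,dt+\sum_i(U_t e_i)f^\gamma\,dZ_t^i+\tfrac12\Hess^{\nabla(t)}f^\gamma(dX_t,dX_t),
\]
and substitute this into the inverted expression. The $dt$-part collapses at once to $(\U_t^{-1}\,\partial_t f)^\alpha\,dt$, and, since $(U_t e_i)f^\gamma=(df\,U_t e_i)^\gamma$, the $dZ$-part collapses to $(\U_t^{-1}\,df\,U_t\,dZ_t)^\alpha$, matching the first two terms of \eqref{FormulaZ}.

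The only step that is more than bookkeeping is to recombine the two remaining second-order contributions into one intrinsic object. Using $d\langle\X^\mu,\X^\nu\rangle_t=\sum_{ij}(U_t e_i)f^\mu\,(U_t e_j)f^\nu\,d\langle Z^i,Z^j\rangle_t$ and expanding $\Hess^{\nabla(t)}f^\gamma(dX_t,dX_t)$ via the quadratic-variation proposition of Section~4, the coefficient of $d\langle Z^i,Z^j\rangle_t$ sitting behind $(\U_t^{-1})^\alpha{}_\gamma$ becomes
\[
\Hess^{\nabla(t)}f^\gamma(U_t e_i,U_t e_j)+\sum_{\mu\nu}\tilde\Gamma^\gamma_{\mu\nu}(t,\X_t)\,(U_t e_i)f^\mu\,(U_t e_j)f^\nu,
\]
which is exactly the coordinate expression for the $\gamma$-component of the joint (second-fundamental-form) Hessian $\Hess^{\nabla(t),\nablaa(t)}f(U_t e_i,U_t e_j)$. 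Re-summing once more through the quadratic-variation proposition then produces the term $\tfrac12\bigl(\U_t^{-1}\Hess^{\nabla(t),\nablaa(t)}f(dX_t,dX_t)\bigr)^\alpha$, completing \eqref{FormulaZ}. I expect this identification to be the main obstacle: it is the one place where the two time-dependent connections genuinely interact, and where the spatial and temporal roles of $f$ must be kept carefully apart. By comparison, the localization and the uniform treatment of the four horizontality cases are routine.
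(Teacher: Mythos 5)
Your proposal is correct and takes essentially the same route as the paper: both proofs expand scalar functions of $\X_t$ twice---once on $N$ via $\Z$, once on $M$ via $Z$ applied to the composition with $f$---and identify the mismatch between the two second-order terms as the joint Hessian $\Hess^{\nabla(t), \nablaa(t)} f$. The only difference is presentational: the paper avoids localization by using arbitrary test functions $\varphi \in C^\infty(N)$ together with the cited intrinsic composition formula $\Hess^{\nabla(t)}(\varphi \circ f) = d\varphi \circ \Hess^{\nabla(t), \nablaa(t)} f + f^* \Hess^{\nablaa(t)} \varphi$ and the pullback formula for quadratic variation, whereas you localize to charts and verify the same identity by hand for the coordinate functions $y^\gamma$ via Christoffel symbols---a correct, self-contained coordinate rendition of the identical argument.
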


\begin{proof}
  Let $n := \dim N$ and $\varphi \in C^\infty(N)$. Then by
  Corollary~\ref{ito3} and the pullback formula for the quadratic
  variation (see e.g.\ \cite[Satz~7.61]{hackenbroch}),
  \begin{align}
    d \varphi(\X_t)
    & =  \sum_{k = 1}^n (\U_t e_k) \varphi(\X_t) d \Z_t^k {} + \frac{1}{2} \Hess^{\nablaa(t)} \! \varphi(d \X_t, d \X_t) \nonumber \\
    & = \sum_{k = 1}^n (\U_t e_k) \varphi(\X_t) d \Z_t^k {} +
    \frac{1}{2} (f^* \Hess^{\nablaa(t)} \! \varphi) (d X_t, d
    X_t). \label{one}
  \end{align}
  On the other hand, using the Hessian composition formula
  \[
  \Hess^{\nabla(t)}(\varphi \circ f) = d\varphi \circ
  \Hess^{\nabla(t), \nablaa(t)} \! f + f^* \Hess^{\nablaa(t)} \varphi
  \]
  (see e.g.\ \cite[Satz~7.155]{hackenbroch}), we obtain
  \begin{align}
    d \varphi(\X_t)
    & =  d (\varphi \circ f) (t, X_t) \nonumber \\
    & =   \frac{\partial (\varphi \circ f)}{\partial t}(t, X_t) dt + \sum_{i = 1}^d (U_t e_i) (\varphi \circ f)(t, X_t) d Z_t^i + \frac{1}{2} \sum_{i, j=1}^d \Hess^{\nabla(t)} (\varphi \circ f)(dX_t, dX_t) \nonumber \\
    & =   \frac{\partial (\varphi \circ f)}{\partial t}(t, X_t) dt + \sum_{i = 1}^d (U_t e_i) (\varphi \circ f)(t, X_t) d Z_t^i \nonumber \\
    &\quad + \frac{1}{2} (d \varphi \circ \Hess^{\nabla(t),
      \nablaa(t)} \!\! f) (dX_t, dX_t) + \frac{1}{2} (f^*
    \Hess^{\nablaa(t)} \! \varphi) (dX_t, dX_t). \label{two}
  \end{align}
  Combining \eqref{one} and \eqref{two} we obtain
  \begin{align*}
    \sum_{k = 1}^n (\U_t e_k) \varphi(\X_t) d \Z_t^k
    & =  \frac{\partial (\varphi \circ f)}{\partial t}(t, X_t) dt + \sum_{i = 1}^d (U_t e_i) (\varphi \circ f)(t, X_t) d Z_t^i \\
    &\quad + \frac{1}{2} (d \varphi \circ \Hess^{\nabla(t),
      \nablaa(t)} \!\! f) (dX_t, dX_t).
  \end{align*}
  Since this holds for all $\varphi \in C^\infty(N)$, it follows that
  \[
  \sum_{k = 1}^n (\U_t e_k) d \Z_t^k = \frac{\partial f}{\partial
    t}(t, X_t) dt + \sum_{i = 1}^d (U_t e_i) f (t, X_t) d Z_t^i +
  \frac{1}{2} \Hess^{\nabla(t), \nablaa(t)} \!\! f (dX_t, dX_t)
  \]
  and hence
  \[
  d \Z_t = \U_t^{-1} \frac{\partial f}{\partial t}(t, X_t) dt +
  \U_t^{-1} df U_t d Z_t + \frac{1}{2} \U_t^{-1} \Hess^{\nabla(t),
    \nablaa(t)} \!\! f (t, X_t)(dX_t, dX_t). \qedhere
  \]
\end{proof}

\begin{corollary}
  If the connections $\nabla(t)$ are the Levi-Civita connections of
  Riemannian metrics $g(t)$ and if $X$ is a $(g(t))_{t \geq
    0}$-Brownian motion (whose $(g(t))_{t \geq
    0}$-Riemann-antidevelopment $W$ is a Euclidean Brownian motion),
  then
  \begin{align}\label{FormulaZRiem}
    d\Z_t = \U_t^{-1} \left( \frac{\partial f}{\partial t} +
      \frac{1}{2} \Delta^{g(t), \nablaa(t)} f \right) (t, X_t) dt +
    \U_t^{-1} df U_t\, d W_t,
  \end{align}
  where $\Delta^{g(t), \nablaa(t)} u$ is the tension field of $u$ with
  respect to $g(t)$ and $\nablaa(t)$.
\end{corollary}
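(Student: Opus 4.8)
The plan is to specialise the general transformation formula \eqref{FormulaZ} of Proposition~\ref{transf} to the Brownian situation. Since $X$ is a $(g(t))_{t \geq 0}$-Brownian motion, its $(g(t))_{t \geq 0}$-Riemann-antidevelopment $Z$ is by hypothesis the Euclidean Brownian motion $W$; hence $dZ_t = dW_t$ and $d\langle Z^i, Z^j \rangle_t = \delta_{ij}\, dt$. Inserting $dZ_t = dW_t$ into \eqref{FormulaZ} reproduces verbatim the first two terms on the right-hand side of \eqref{FormulaZRiem}, so it remains only to rewrite the Hessian term $\tfrac{1}{2} \U_t^{-1} \Hess^{\nabla(t), \nablaa(t)} f(t, X_t)(dX_t, dX_t)$ as $\tfrac{1}{2} \U_t^{-1} \Delta^{g(t), \nablaa(t)} f(t, X_t)\, dt$.

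For this I would apply the quadratic-variation identity established earlier (in the form $\int_0^\bullet B_s(dX_s, dX_s) = \sum_{i,j} \int_0^\bullet B_s(U_s e_i, U_s e_j)\, d\langle Z^i, Z^j \rangle_s$), taken componentwise for the $f^*TN$-valued bilinear form $B = \Hess^{\nabla(t), \nablaa(t)} f$, to obtain
\[
\Hess^{\nabla(t), \nablaa(t)} f(dX_t, dX_t) = \sum_{i,j=1}^d \Hess^{\nabla(t), \nablaa(t)} f(U_t e_i, U_t e_j)\, d\langle Z^i, Z^j \rangle_t.
\]
Using $d\langle Z^i, Z^j \rangle_t = \delta_{ij}\, dt$ then collapses the double sum to $\sum_{i=1}^d \Hess^{\nabla(t), \nablaa(t)} f(U_t e_i, U_t e_i)\, dt$.

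The decisive point is that $U$, being a $(g(t))_{t \geq 0}$-Riemann-horizontal lift, satisfies $U_t \in \OO_{g(t)}(M)$ by Proposition~\ref{orthonormal}, so that $(U_t e_i)_{i=1}^d$ is a $g(t)$-orthonormal frame of $T_{X_t}M$. Consequently the remaining sum is precisely the $g(t)$-trace of the generalised Hessian,
\[
\sum_{i=1}^d \Hess^{\nabla(t), \nablaa(t)} f(U_t e_i, U_t e_i) = \tr_{g(t)} \Hess^{\nabla(t), \nablaa(t)} f = \Delta^{g(t), \nablaa(t)} f,
\]
which is by definition the tension field. Substituting back and grouping the two $dt$-contributions yields \eqref{FormulaZRiem}. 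I do not anticipate a genuine obstacle here; the only point requiring a little care is the componentwise application of the scalar quadratic-variation formula to the bundle-valued Hessian $B$, which is justified exactly as in the proof of Proposition~\ref{transf} (apply the scalar statement to each $\varphi \circ f$ and read off the components), after which the orthonormality of the frame $U_t$ does all the remaining work.
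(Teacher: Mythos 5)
Your proposal is correct and is exactly the argument the paper leaves implicit: the corollary is stated without proof as an immediate specialization of Proposition~\ref{transf}, obtained by setting $dZ_t = dW_t$ with $d\langle Z^i, Z^j\rangle_t = \delta_{ij}\,dt$ and tracing the Hessian term over the frame $(U_t e_i)_{i=1}^d$, which is $g(t)$-orthonormal by Proposition~\ref{orthonormal}, so that $\sum_i \Hess^{\nabla(t),\nablaa(t)} f(U_t e_i, U_t e_i) = \Delta^{g(t),\nablaa(t)} f$. Your care about applying the scalar quadratic-variation identity componentwise to the bundle-valued Hessian (via $\varphi \circ f$, as in the proof of Proposition~\ref{transf}) is exactly the right justification, and nothing is missing.
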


\begin{corollary}\label{btm}
  The function $f$ maps $(g(t))_{t \geq 0}$-Brownian motions to
  $(\nabla(t))_{t \geq 0}$-martingales if and only if
  \[
  \frac{\partial f}{\partial t} + \frac{1}{2} \Delta^{g(t),
    \nablaa(t)} f = 0
  \]
  for all $t \geq 0$.
\end{corollary}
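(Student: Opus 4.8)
The plan is to read the statement off directly from the preceding corollary, specifically formula~\eqref{FormulaZRiem}, combined with the characterisation of a martingale through the vanishing of the drift of its antidevelopment (Definition~\ref{martingale} via Proposition~\ref{marteq}).

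First I would specialise to the setting of that corollary: let $X$ be a $(g(t))_{t \geq 0}$-Brownian motion whose Riemann-antidevelopment $W$ is a Euclidean Brownian motion, let $\X_t := f(t, X_t)$ be its image, and let $\Z$ denote the $(\nablaa(t))_{t \geq 0}$-antidevelopment of $\X$. Formula~\eqref{FormulaZRiem} then reads
\[
d\Z_t = \U_t^{-1} \Phi(t, X_t)\, dt + \U_t^{-1}\, df\, U_t\, dW_t, \qquad \Phi(t,x) := \frac{\partial f}{\partial t}(t,x) + \frac{1}{2} \Delta^{g(t), \nablaa(t)} f(t,x),
\]
where $\Phi(t,x) \in T_{f(t,x)}N$. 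The second term is an It\^o integral against a Brownian motion with locally bounded integrand, hence always a local martingale, whereas the first term is of finite variation. By Definition~\ref{martingale} the image $\X$ is a $(\nablaa(t))_{t \geq 0}$-martingale precisely when its antidevelopment $\Z$ is a local martingale, and by uniqueness of the decomposition of a continuous semimartingale into its local-martingale and finite-variation parts this holds if and only if the drift $\U_t^{-1}\Phi(t,X_t)\,dt$ vanishes. Since $\U_t \in \F(N)$ is invertible, this is in turn equivalent to $\Phi(t, X_t) = 0$ for $dt$-almost every $t$, almost surely.

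The \emph{if} direction is now immediate: if $\Phi \equiv 0$ on $\R_+ \times M$, then the drift of $\Z$ vanishes for every $(g(t))_{t \geq 0}$-Brownian motion $X$, so each image process $f(\cdot, X)$ is a $(\nablaa(t))_{t \geq 0}$-martingale. For the \emph{only if} direction I would argue by contradiction using a support argument. Suppose $\Phi(t_0, x_0) \neq 0$ for some $(t_0, x_0) \in \R_+ \times M$. Taking a $(g(t))_{t \geq 0}$-Brownian motion $X$ passing through $x_0$ at time $t_0$ and using the continuity of $\Phi$ together with the fact that $X_s$ stays in an arbitrarily small neighbourhood of $x_0$ for $s$ near $t_0$ with positive probability, the finite-variation integral $\int_{t_0}^{t_0+\delta} \U_s^{-1}\Phi(s, X_s)\, ds$ is nonzero with positive probability for small $\delta > 0$. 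Hence $\Z$ fails to be a local martingale, contradicting the hypothesis that $f$ sends every Brownian motion to a martingale; thus $\Phi$ must vanish identically.

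The main obstacle is precisely this last support argument: one must make rigorous that the vanishing of the drift along \emph{every} Brownian trajectory forces the pointwise vanishing of $\Phi$ on all of $\R_+ \times M$. This relies on the existence of $(g(t))_{t \geq 0}$-Brownian motion started from an arbitrary point (supplied by the Arnaudon--Coulibaly--Thalmaier theory cited in the introduction) and on the elementary fact that such a process enters every neighbourhood of its starting point on arbitrarily short time intervals with positive probability. Everything else is a direct reading of~\eqref{FormulaZRiem} and of the definition of a martingale.
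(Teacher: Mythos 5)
Your proposal is correct and takes essentially the same route as the paper: the authors state Corollary~\ref{btm} without proof as an immediate consequence of formula~\eqref{FormulaZRiem}, and your argument (vanishing of the drift of $\Z$ via the unique decomposition of a continuous semimartingale, invertibility of the frame $\U_t$, and a support argument for the only-if direction) simply makes explicit the standard details the authors leave to the reader.
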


\begin{remark}
  In the situation of Proposition \ref{transf} one may consider the
  ``intrinsic'' antidevelopments of $X$, respectively $\X$, defined by
$$\A_t:=U_0 Z_t,\quad\text{respectively}\quad\tilde\A_t:=\U_0 \Z_t,$$
which take values in $T_{X_0}^{\mathstrut}M$, respectively~$T_{\tilde
  X_0}N$. Note that
$$d\A_t=\parr_{0,t}^{-1}\circ dX_t,\quad\text{respectively}\quad d\tilde\A_t=\tildeparrinv_{0,t}\circ d\X_t,$$
where $\parr_{0,t}\equiv U_t^{\mathstrut}\,U_0^{-1}$ and
$\tildeparr_{0,t}\equiv \U_t^{\mathstrut}\,\U_0^{-1}$ denote the
parallel transports along $X$, respectively~$\X$.  Then formula
\eqref{FormulaZ} reads more intrinsically as
\begin{equation*}
  d\tilde\A_t = \tildeparrinv_{0,t}\,\frac{\partial f}{\partial t}(t, X_t)\, dt + \tildeparrinv_{0,t}\, df\, \parr_{0,t}^{\mathstrut}\, d\A_t + \frac{1}{2} \tildeparrinv_{0,t}\, \Hess^{\nabla(t), \nablaa(t)} \! f (t, X_t)(dX_t, dX_t). 
\end{equation*}
The same remark applies to formula \eqref{FormulaZRiem} which then
reads as
\begin{align*}
  d\tilde\A_t = \tildeparrinv_{0,t}\,\left( \frac{\partial f}{\partial
      t} + \frac{1}{2} \Delta^{g(t), \nablaa(t)} f \right) (t, X_t) dt
  + \tildeparrinv_{0,t}\, df \, \parr_{0,t}^{\mathstrut}\, d\A_t\,.
\end{align*}
Recall that in this formula $\A_t=U_0W_t$ is a Euclidean Brownian
motion in $T_{X_0}^{\mathstrut}M$.
\end{remark}

\section{Derivative processes, martingales on the tangent bundle and applications to the nonlinear heat equation}
In this section we assume for simplicity that the connections
$\nabla(t)$ are torsion-free.  Let $\nabla'(t)$ the complete and
$\nabla^h(t)$ the horizontal lift of $\nabla(t)$ to the tangent bundle
$TM$. In the same way as in \cite{arnaudonthalmaier} one can obtain
the following results.

\begin{theorem}[cf.\ {\cite[Theorem~3.1]{arnaudonthalmaier}} for the case of a fixed connection]\label{deriv}
  Let $I$ be an open interval containing 0 and $(X_t(s))_{t \geq 0, s
    \in I}$ a $C^1$-family of continuous $M$-valued $(\nabla(t))_{t
    \geq 0}$-martingales. Then the $TM$-valued derivative process
  $(X_t')_{t \geq 0}$ defined by
$$X_t' := \left.\frac{\partial}{\partial s}\right|_{s=0} X_t(s)$$ is a $(\nabla'(t))_{t \geq 0}$-martingale.
\end{theorem}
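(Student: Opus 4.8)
The plan is to characterize $(\nabla'(t))_{t\geq 0}$-martingales on $TM$ by an It\^o criterion analogous to Proposition~\ref{marteq}, and then verify that criterion for the derivative process $X'$ by differentiating the martingale property of the family $(X_t(s))_s$ in the parameter $s$. Since we have the local characterization of Proposition~\ref{local}, the cleanest route is to work in local coordinates: the complete lift $\nabla'(t)$ of a torsion-free connection $\nabla(t)$ has Christoffel symbols on $TM$ that are explicit first-order expressions in the $\Gamma^i_{jk}(t,x)$ and their $x$-derivatives, so the martingale equation for a $TM$-valued process $(X,V)$ in coordinates $(x^i, v^i)$ splits into the base equation for $X$ and a linear equation for the fibre component $V$. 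The strategy is to write $X_t' = (X_t, V_t)$ with $V_t^i = \partial_s|_{s=0} X_t^i(s)$ and check that both components satisfy the required drift identities up to a local martingale.

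First I would recall from \cite{arnaudonthalmaier} the coordinate form of the Christoffel symbols of the complete lift: if $\Gamma^i_{jk}$ are those of $\nabla(t)$ on $M$, then the nonzero symbols of $\nabla'(t)$ are $\Gamma^i_{jk}$ in the horizontal-horizontal block and $\partial_\ell \Gamma^i_{jk}$ (contracted against the fibre variable) in the mixed blocks. Then, by Proposition~\ref{local}, $X'$ is a $(\nabla'(t))_{t\geq 0}$-martingale iff, up to a local martingale,
\[
dX_t^i = -\tfrac12 \sum_{j,k}\Gamma^i_{jk}(t,X_t)\,d\langle X^j,X^k\rangle_t
\]
and
\[
dV_t^i = -\tfrac12 \sum_{j,k}\Gamma^i_{jk}(t,X_t)\,\big(2\,d\langle X^j, V^k\rangle_t\big) - \tfrac12 \sum_{j,k,\ell}\partial_\ell\Gamma^i_{jk}(t,X_t)\,V_t^\ell\,d\langle X^j,X^k\rangle_t.
\]
The first identity holds because each $X(s)$, in particular $X(0)=X$, is a $(\nabla(t))_{t\geq 0}$-martingale. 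The core of the argument is to derive the second identity by differentiating the first one along $s$ at $s=0$: writing the martingale equation for $X(s)$ and applying $\partial_s|_{s=0}$ to both sides, the left side yields $dV_t^i$, and the right side produces exactly the two terms above, the $\partial_\ell\Gamma^i_{jk}\,V^\ell$ term from differentiating $\Gamma$ through its $x$-dependence and the $d\langle X^j,V^k\rangle$ term from differentiating the quadratic covariation $d\langle X^j(s),X^k(s)\rangle_t$.

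The hard part will be justifying the interchange of the parameter derivative $\partial_s$ with the stochastic differential and with the quadratic-variation bracket, i.e.\ making precise that $\partial_s|_{s=0} d\langle X^j(s),X^k(s)\rangle_t = 2\,d\langle X^j, V^k\rangle_t$ and that differentiating ``$=$ local martingale'' in $s$ again yields a local martingale. This is where the $C^1$-dependence hypothesis on the family is essential, and it is the step one must handle with care rather than formally; the corresponding regularity and differentiation-under-the-integral arguments are carried out in \cite{arnaudonthalmaier} for the fixed-connection case, and since the only new feature here is the smooth $t$-dependence of $\Gamma$ (which enters only through the already-$t$-indexed coefficients and does not interfere with the $s$-differentiation), the same estimates apply verbatim. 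Once the interchange is justified, assembling the two coordinate identities into the intrinsic statement that $X'$ solves the $\nabla'(t)$-martingale equation completes the proof.
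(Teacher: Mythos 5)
Your proposal is correct and takes essentially the same route as the paper, which offers no proof of Theorem~\ref{deriv} beyond the remark that the result is obtained ``in the same way as in \cite{arnaudonthalmaier}'': your local-coordinate argument --- the complete-lift Christoffel symbols $\Gamma^i_{jk}$ and $v^\ell\partial_\ell\Gamma^i_{jk}$, the split of the $(\nabla'(t))_{t\geq 0}$-martingale equation via Proposition~\ref{local} into the base identity and the linear fibre identity with the $2\,d\langle X^j,V^k\rangle_t$ and $\partial_\ell\Gamma^i_{jk}\,V^\ell\,d\langle X^j,X^k\rangle_t$ terms, obtained by differentiating the drift identity of $X(s)$ in $s$ --- is exactly the fixed-connection argument of \cite{arnaudonthalmaier} transplanted to the time-dependent setting. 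You also correctly identify the one genuinely delicate step, the interchange of $\partial_s$ with stochastic differentials and quadratic variations, and, like the paper, delegate it to the regularity estimates of \cite{arnaudonthalmaier}, where the smooth $t$-dependence of the $\Gamma^i_{jk}$ indeed introduces no new difficulty.
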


\begin{theorem}[cf.\ {\cite[Corollary~4.4]{arnaudonthalmaier}} for the case of a fixed connection]\label{horliftmart}
  A $TM$-valued semimartingale $J$ is a $(\nabla^h(t))_{t \geq
    0}$-martingale if and only if
  \begin{enumerate}
  \item its projection $X$ to $M$ is a $(\nabla(t))_{t \geq
      0}$-martingale, and

  \item $d(\parr_{0,t}^{-1} J_t) \stackrel{\m}{=}
    0$. 
  \end{enumerate}
\end{theorem}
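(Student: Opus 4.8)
The plan is to characterize $(\nabla^h(t))_{t \geq 0}$-martingales on $TM$ via the local expression (Proposition~\ref{local}), exploiting the explicit structure of the Christoffel symbols of the horizontal lift connection $\nabla^h(t)$. First I would recall that $\nabla^h(t)$ is the connection on $TM$ whose geodesics project to $\nabla(t)$-geodesics with parallel velocity field; concretely, in natural bundle coordinates $(x^i, v^i)$ induced from coordinates $(x^i)$ on $M$, its Christoffel symbols are built from those of $\nabla(t)$ together with their first derivatives and the curvature, but crucially the horizontal-horizontal block governing the base motion coincides with $\Gamma^k_{ij}(t,x)$, while the mixed and fibre blocks encode the parallel transport equation. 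The key idea is that horizontality of $\nabla^h(t)$ decouples into a base condition and a fibre condition precisely matching (i) and (ii).

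The main computation I would carry out is to write $J_t = (X_t, V_t)$ with $V_t \in T_{X_t}M$ and apply Proposition~\ref{local} to $\nabla^h(t)$. The base-coordinate equations reproduce exactly the martingale equation for $X$ with respect to $\nabla(t)$, giving condition~(i). For the fibre coordinates, I expect the local $\nabla^h(t)$-martingale equation to read, up to a local-martingale differential,
\begin{equation*}
  dV_t^k = -\sum_{i,j} \Gamma^k_{ij}(t,X_t)\,V_t^i \circ dX_t^j + (\text{finite-variation correction}),
\end{equation*}
which is precisely the Stratonovich parallel-transport equation along $X$ driven by $dX$. Since $\parr_{0,t}$ solves this transport equation, the process $\parr_{0,t}^{-1}V_t$ differentiates to the pure ``martingale part'' of $V$ in the frame at $X_0$; condition (ii) then states exactly that this frame-expressed process has vanishing finite-variation part, i.e.\ $d(\parr_{0,t}^{-1}J_t) \stackrel{\m}{=} 0$. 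The appearance of the time-dependence of $\nabla(t)$ must be tracked carefully here, since $\partial_t \Gamma$ enters the fibre Christoffel symbols of $\nabla^h(t)$; I would verify that this $\partial_t$-term is compatible with the definition of $\parr_{0,t}$ as the $(\nabla(t))_{t \geq 0}$-parallel transport, which by construction already incorporates the instantaneous connection $\nabla(t)$.

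Equivalently, and perhaps more cleanly, I would avoid heavy coordinate computation by invoking the transfer principle of Proposition~\ref{transf}. Writing $A_t := \parr_{0,t}^{-1} J_t$, a $T_{X_0}M$-valued process, one checks via the defining property of the horizontal lift connection that $J$ is a $(\nabla^h(t))_{t \geq 0}$-martingale if and only if $X$ is a $(\nabla(t))_{t \geq 0}$-martingale and $A$ is a local martingale in the fixed vector space $T_{X_0}M$; the latter is the assertion $dA_t \stackrel{\m}{=} 0$, which is condition~(ii). The hard part will be establishing the precise form of the horizontal-lift Christoffel symbols in the time-dependent setting and confirming that the $\partial_t$-contribution integrates into the parallel transport rather than producing an extra drift; once the parallel-transport equation for the fibre part is correctly identified, the equivalence of (ii) with vanishing finite-variation part of $\parr_{0,t}^{-1}J_t$ is immediate.
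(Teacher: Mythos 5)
Your overall skeleton is the right one---decompose $J=(X,V)$ in induced coordinates on $TM$, read condition (i) off the base block, and identify the fibre block with the Stratonovich transport equation to get condition (ii)---and it is in fact more explicit than what the paper records: the paper proves Theorem~\ref{horliftmart} only by the remark that it follows ``in the same way as in'' Arnaudon--Thalmaier, whose argument is intrinsic (splitting the It\^o differential of $J$ with respect to $\nabla^h$ into horizontal and vertical parts) rather than coordinate-based. Nevertheless your attempt has a genuine gap: the decisive step is precisely the one you defer as ``the hard part''. You never write down the Christoffel symbols of $\nabla^h(t)$ and never verify that the fibre block of the local martingale equation of Proposition~\ref{local} collapses, after the It\^o--Stratonovich bookkeeping and after using condition (i), to $dV_t^k + \sum_{i,j}\Gamma^k_{ij}(t,X_t)\,V^i_t\circ dX^j_t \stackrel{\m}{=} 0$, i.e.\ to $d(\parr_{0,t}^{-1}J_t)\stackrel{\m}{=}0$; since this identification \emph{is} the content of the theorem, what you have is a plan, not a proof. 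Moreover, your worry about a $\partial_t\Gamma$-contribution rests on a misconception: $\nabla^h(t)$ is by definition the horizontal lift of $\nabla(t)$ at each frozen time $t$, so its Christoffel symbols involve only $\Gamma(t,\cdot)$ and its spatial derivatives---no time derivative of the connection enters, and none is needed, because the time-dependent martingale property (Propositions~\ref{marteq} and~\ref{local}) and the transport $\parr_{0,t}$ (which by the construction in Section~2 solves $d(\parr_{0,t}v)^k=-\sum_{i,j}\Gamma^k_{ij}(t,X_t)(\parr_{0,t}v)^i\circ dX^j_t$) both see the connection only through its value at the current time. (Note also that $\nabla^h(t)$ has torsion even for torsion-free $\nabla(t)$; this is harmless here since only the symmetrized Christoffel symbols enter Proposition~\ref{local}, but it should be said.)

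Your ``cleaner'' fallback via Proposition~\ref{transf} would fail as stated: that proposition applies to a deterministic map $f\in C^\infty(\R_+\times M,N)$, whereas $J_t\mapsto \parr_{0,t}^{-1}J_t$ is path-dependent (the transport is built from the horizontal lift along the random path $X$), so formula \eqref{FormulaZ} is simply not applicable to it. To argue intrinsically one must instead work, as Arnaudon and Thalmaier do in the fixed-connection case, with the horizontal/vertical splitting of $T(TM)$ induced by $\nabla(t)$ and the observation that the vertical component of the $\nabla^h(t)$-antidevelopment of $J$ is exactly the antidevelopment of the fibre motion read through $\parr_{0,t}^{-1}$; that mechanism, not the transfer principle for image processes, is what yields the equivalence with (i) and (ii).
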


\begin{theorem}[cf.\ {\cite[Theorem~4.12]{arnaudonthalmaier}} for the case of a fixed connection]\label{compmart}
  A $TM$-valued semimartingale $J$ is a $(\nabla'(t))_{t \geq
    0}$-martingale if and only if
  \begin{enumerate}
  \item its projection $X$ to $M$ is a $(\nabla(t))_{t \geq
      0}$-martingale, and

  \item $d(\Theta_{0,t}^{-1} J_t) \stackrel{\m}{=} 0$, where
    $\Theta_{0,t}: T_{X_0}M \to T_{X_t}M$ denotes the damped parallel
    transport along~$X$, defined by the covariant equation
    \[
    d \left( \parr_{0,t}^{-1} \Theta_{0,t} \right) =
    -\frac{1}{2} \parr_{0,t}^{-1} R^{\nabla(t)}(\Theta_{0,t},
    dX_t)dX_t, \qquad \Theta_{0,0} = \Id_{T_{X_0}M}.
    \]
  \end{enumerate}
\end{theorem}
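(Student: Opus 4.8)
The plan is to reproduce, in the time-dependent setting, the argument of \cite[Theorem~4.12]{arnaudonthalmaier}. A decisive simplification is that, by Proposition~\ref{local}, being a $(\nabla'(t))_{t\geq 0}$-martingale is a condition imposed pointwise in~$t$ on the complete-lift Christoffel symbols, with no explicit $\partial_t$-term entering, so that every geometric object below is simply evaluated at the running time. To obtain the necessity of~(1), observe that the projection $\pi\colon TM\to M$ is affine from $(TM,\nabla'(t))$ to $(M,\nabla(t))$ for every~$t$, since the complete lift $\nabla'(t)$ projects onto $\nabla(t)$; hence $\Hess^{\nabla'(t),\nabla(t)}\pi=0$. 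Applying Proposition~\ref{transf} to the semimartingale~$J$ and the map $f=\pi$ (so that the image process is $X=\pi\circ J$, and writing~$U$ for the horizontal lift of~$J$ and~$\U$ for that of~$X$) gives $d\tilde Z_t=\U_t^{-1}\,d\pi\,U_t\,dZ_t$, where $Z,\tilde Z$ denote the antidevelopments of $J,X$. Thus if $J$ is a $(\nabla'(t))_{t\geq 0}$-martingale then $Z$, and hence $\tilde Z$, is a local martingale, so that $X$ is a $(\nabla(t))_{t\geq 0}$-martingale.

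The core of the argument is the comparison of $\nabla'(t)$ with the horizontal lift $\nabla^h(t)$, for which Theorem~\ref{horliftmart} is available. Both lifts project onto $\nabla(t)$, so their difference $A(t):=\nabla'(t)-\nabla^h(t)$ is a $(1,2)$-tensor taking values in the vertical subbundle $V\subset TTM$; at a point $(x,a)\in TM$ it is built from $R^{\nabla(t)}(\cdot,\cdot)a$. Inserting $\Gamma'=\Gamma^h+A(t)$ into the local martingale equation of Proposition~\ref{local} shows that $J$ is a $(\nabla'(t))_{t\geq 0}$-martingale if and only if it satisfies the $(\nabla^h(t))_{t\geq 0}$-martingale equation corrected by the It\^o term $\tfrac12 A(t)(dJ_t,dJ_t)$. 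As $A(t)$ is vertical the projection equation is unaffected (again yielding~(1)), while in the fibre, after applying $\parr_{0,t}^{-1}$ and identifying the vertical space with $T_{X_0}M$, the correction equals $\tfrac12\,\parr_{0,t}^{-1}R^{\nabla(t)}(J_t,dX_t)dX_t$. Together with condition~(2) of Theorem~\ref{horliftmart} this shows that $J$ is a $(\nabla'(t))_{t\geq 0}$-martingale if and only if $X$ is a $(\nabla(t))_{t\geq 0}$-martingale and
\begin{equation}\label{starcond}
  d\bigl(\parr_{0,t}^{-1}J_t\bigr)+\tfrac12\,\parr_{0,t}^{-1}R^{\nabla(t)}(J_t,dX_t)dX_t\stackrel{\m}{=}0 .
\end{equation}

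It remains to recognise \eqref{starcond} as condition~(2). Here I would use that $\parr_{0,t}^{-1}\Theta_{0,t}$ has finite variation, with $d\bigl(\parr_{0,t}^{-1}\Theta_{0,t}\bigr)=-\tfrac12\,\parr_{0,t}^{-1}R^{\nabla(t)}(\Theta_{0,t},dX_t)dX_t$ by the defining equation of~$\Theta$. Setting $N_t:=\Theta_{0,t}^{-1}J_t$ and differentiating the product $\parr_{0,t}^{-1}J_t=\bigl(\parr_{0,t}^{-1}\Theta_{0,t}\bigr)N_t$ (no bracket term arises, the first factor being of finite variation) yields, since $\Theta_{0,t}N_t=J_t$,
\[
  d\bigl(\parr_{0,t}^{-1}J_t\bigr)=-\tfrac12\,\parr_{0,t}^{-1}R^{\nabla(t)}(J_t,dX_t)dX_t+\bigl(\parr_{0,t}^{-1}\Theta_{0,t}\bigr)\,dN_t .
\]
Hence \eqref{starcond} holds if and only if $\bigl(\parr_{0,t}^{-1}\Theta_{0,t}\bigr)dN_t$ is a local martingale differential, which, $\parr_{0,t}^{-1}\Theta_{0,t}$ being invertible of finite variation, is equivalent to $d(\Theta_{0,t}^{-1}J_t)\stackrel{\m}{=}0$, that is, to~(2).

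The step I expect to be the main obstacle is the explicit determination of the difference tensor $A(t)=\nabla'(t)-\nabla^h(t)$ and the verification that its fibre contribution is exactly $\tfrac12 R^{\nabla(t)}(J_t,dX_t)dX_t$ with the correct sign; this is where the curvature enters and where the sign and index conventions for the complete lift must be handled with care. By contrast the time-dependence is harmless: since Proposition~\ref{local} renders the martingale condition pointwise in~$t$, the curvature $R^{\nabla(t)}$, the parallel transport $\parr_{s,t}$ and the damped transport $\Theta_{0,t}$ are all taken at the running time, and the proof collapses to the fixed-connection computation of \cite{arnaudonthalmaier}.
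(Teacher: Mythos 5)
Your proposal is correct and takes the same route the paper intends: the paper gives no written proof of Theorem~\ref{compmart}, saying only that it is obtained ``in the same way as in'' \cite{arnaudonthalmaier}, and your reconstruction — splitting $\nabla'(t)=\nabla^h(t)+A(t)$ with $A(t)$ vertical and built from $R^{\nabla(t)}(\cdot\,,\cdot)a$, feeding the It\^o correction $\tfrac12 A(t)(dJ_t,dJ_t)$ through the characterization of $(\nabla^h(t))_{t\geq 0}$-martingales, and converting the corrected fibre condition into $d(\Theta_{0,t}^{-1}J_t)\stackrel{\m}{=}0$ via the finite-variation product rule for $\parr_{0,t}^{-1}\Theta_{0,t}$ — is exactly that fixed-connection argument transported to the running time, which Proposition~\ref{local} legitimizes since the martingale condition involves the Christoffel symbols only through evaluation at $(t,X_t)$. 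The one point worth making explicit is that you need the drift-corrected version of Theorem~\ref{horliftmart} (i.e.\ its proof mechanism identifying the fibre part of the $\nabla^h(t)$-It\^o drift with $\parr_{0,t}\,d(\parr_{0,t}^{-1}J_t)$, not merely the stated equivalence), but your insertion of $\Gamma'=\Gamma^h+A(t)$ into the local martingale equation supplies precisely that.
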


\begin{remark}
  In the Riemannian case the condition $d(\parr_{0,t}^{-1} J_t)
  \stackrel{\m}{=} 0$ in Theorem~\ref{horliftmart} can also be
  expressed using the Riemann-parallel transport
  $\parr_{0,t}^{\Riem}$; using Remark~\ref{relpar} one obtains that it
  is equivalent to
$$d \left( (\parr_{0,t}^{\Riem})^{-1} J_t \right) \stackrel{\m}{=} \frac{1}{2} (\parr_{0,t}^{\Riem})^{-1} \left( \frac{\partial g}{\partial t} \right)^\# \! J_t \, dt.$$ Similarly, the equation defining the damped parallel transport is equivalent to
\begin{equation}\label{damped2}
  d \left( (\parr_{0,t}^{\Riem})^{-1} \Theta_{0,t} \right) = \frac{1}{2} (\parr_{0,t}^{\Riem})^{-1} \left( \frac{\partial g}{\partial t} \right)^\# \! J_t \, dt - \frac{1}{2} (\parr_{0,t}^{\Riem})^{-1} R^{g(t)}(\Theta_{0,t}, dX_t)dX_t.
\end{equation}
If $X$ is a $(g(t))_{t \geq 0}$-Brownian motion, \eqref{damped2}
simplifies to
\begin{equation}\label{dampedbrownian}
  d \left( \parr_{0,t}^{-1} \Theta_{0,t} \right) = \frac{1}{2} \left(\parr_{0,t}^{\Riem}\right)^{-1} \left( \left( \frac{\partial g}{\partial t} \right)^\# - (\Ric^{g(t)})^\# \right) \Theta_{0,t} dt,
\end{equation}
which coincides with the expression given in
\cite[Definition~2.1]{ACT} and \cite[Definition~3.1]{Coulibaly}.
\end{remark}

Combining Theorems~\ref{deriv} and \ref{compmart} one obtains

\begin{corollary}\label{dermart}
  Let $I$ be an open interval containing $0$, $(X_t(s))_{s \in I}$ a
  $C^1$-family of continuous $M$-valued martingales, $X_t := X_t(0)$,
  and $(X_t')_{t \geq 0}$ the $TM$-valued derivative process defined
  by $X_t' := \frac{\partial}{\partial s}|_{s=0} X_t(s)$.
  Then the process $(\Theta_{0,t}^{-1} X_t')_{t \geq 0}$ is a
  $T_{X_0}M$-valued local martingale.
\end{corollary}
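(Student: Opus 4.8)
The plan is simply to combine Theorems~\ref{deriv} and \ref{compmart}, of which this corollary is an immediate consequence. First I would apply Theorem~\ref{deriv}: since $(X_t(s))_{s \in I}$ is a $C^1$-family of continuous $M$-valued $(\nabla(t))_{t \geq 0}$-martingales, its derivative process $(X_t')_{t \geq 0}$ is a $(\nabla'(t))_{t \geq 0}$-martingale on the tangent bundle $TM$.

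Next I would set $J := X'$ and check the hypotheses of Theorem~\ref{compmart}. By definition $X_t'$ is a tangent vector based at $X_t(0) = X_t$, so the projection of $J$ to $M$ is the process $X = X(0)$; being the member of the given family at $s = 0$, this $X$ is a $(\nabla(t))_{t \geq 0}$-martingale, which is precisely condition~(i). Since $J$ is a $(\nabla'(t))_{t \geq 0}$-martingale by the previous step, the equivalence in Theorem~\ref{compmart} forces condition~(ii) to hold as well, namely $d(\Theta_{0,t}^{-1} X_t') \stackrel{\m}{=} 0$, with $\Theta_{0,t}$ the damped parallel transport along~$X$.

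Finally I would read off the conclusion: the relation $d(\Theta_{0,t}^{-1} X_t') \stackrel{\m}{=} 0$ says exactly that the finite-variation part of $\Theta_{0,t}^{-1} X_t'$ vanishes, i.e.\ that $(\Theta_{0,t}^{-1} X_t')_{t \geq 0}$ is a $T_{X_0}M$-valued local martingale, which is the claim. There is no genuine obstacle in this argument; the only point deserving attention is the compatibility of the two theorems, namely that the damped parallel transport $\Theta$ appearing in Theorem~\ref{compmart} is taken along the \emph{same} base martingale $X$ that arises as the projection of the derivative process. Once this identification is noted, the two statements dovetail with no further computation.
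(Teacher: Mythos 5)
Your proposal is correct and follows exactly the paper's intended argument: the paper derives this corollary precisely by combining Theorems~\ref{deriv} and~\ref{compmart}, applying the first to identify $X'$ as a $(\nabla'(t))_{t \geq 0}$-martingale and the second to conclude $d(\Theta_{0,t}^{-1} X_t') \stackrel{\m}{=} 0$. Your closing remark about the damped parallel transport being taken along the same base martingale $X$ that is the projection of $X'$ is the right compatibility point to check, and nothing further is needed.
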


Let now $N$ another differentiable manifold and $T_1 < T_2$. Let
$(g(t))_{T_1 \leq t \leq T_2}$ be a smooth family of Riemannian
metrics on $M$, $\nabla(t)$ the Levi-Civita connection of $g(t)$ and
$(\tilde \nabla(t))_{T_1 \leq t \leq T_2}$ a smooth family of
connections on $N$. Let $u: [T_1,T_2] \times M \to N$ be a solution of
the non-linear heat equation
\begin{equation}\label{NonlinHeatEqu}
  \frac{\partial u}{\partial t}  = \frac{1}{2} \Delta^{g(t), \tilde \nabla(t)} u.
\end{equation}
We fix $x \in M$, let $(X_t)_{0 \leq t \leq T_2 -T_1}$ be an
$M$-valued $(g(T_2-t))_{0 \leq t \leq T_2-T_1}$-valued Brownian motion
starting at $x$, and define
$$\X_t := u(T_2-t, X_t),\quad 0\leq t \leq T_2 -T_1.$$ 

\begin{proposition}[cf.\ {\cite[(5.22)]{arnaudonthalmaier}} for the case of fixed Riemannian metrics]\label{martheat}
  Let $$u: [T_1,T_2] \times M \to N$$ be a solution of
  Eq.~\eqref{NonlinHeatEqu}.  Let $\Theta_{0,t}: T_xM \to T_{X_t}M$ be
  the damped parallel transport along~$X$, and $\tilde \Theta_{0,t}:
  T_{\X_0}N \to T_{\X_t}N$ the damped parallel transport along $\X$,
  where $X$ and $\X$ are defined as above.  Then for each $v \in T_xM$
  the $T_{u(T,x)}N$-valued process
  \[
  \tilde \Theta_{0,t}^{-1}\, du(T_2-t, X_t)\, \Theta_{0,t}\, v,\quad
  0\leq t \leq T_2 -T_1,
  \]
  is a local martingale.
\end{proposition}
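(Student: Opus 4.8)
The plan is to deduce the statement from the derivative-process machinery of Corollary~\ref{dermart}, applied to a suitable variation of the Brownian motion~$X$ pushed forward by~$u$.

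First I would record that $\X$ is a martingale. Writing $f(t,y):=u(T_2-t,y)$ for $t\in[0,T_2-T_1]$, the family $t\mapsto g(T_2-t)$ of metrics on~$M$ and $t\mapsto\nablaa(T_2-t)$ of connections on~$N$ fit the framework of Corollary~\ref{btm}. Since $\partial_t f=-\partial_s u$ and $\Delta^{g(T_2-t),\nablaa(T_2-t)}f=\Delta^{g(s),\nablaa(s)}u$ evaluated at $s=T_2-t$, the nonlinear heat equation~\eqref{NonlinHeatEqu} is exactly the condition $\partial_t f+\tfrac12\Delta^{g(T_2-t),\nablaa(T_2-t)}f=0$ of Corollary~\ref{btm}. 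Hence $f$ maps the $(g(T_2-t))$-Brownian motion~$X$ to a $(\nablaa(T_2-t))$-martingale, i.e.\ $\X_t=u(T_2-t,X_t)$ is a $(\nablaa(T_2-t))$-martingale.

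Next I would realize $\Theta_{0,t}v$ as the derivative of the Brownian flow. Let $W$ be the (Euclidean) Riemann-antidevelopment of~$X$, fix a smooth curve $c$ in~$M$ with $c(0)=x$, $\dot c(0)=v$, and a smooth family of frames $U_0(a)\in\OO_{g(T_2)}(M)$ over $c(a)$. Solving, for each~$a$, the $(g(T_2-t))$-Riemann-horizontal lift SDE driven by the \emph{fixed} noise~$W$ and projecting to~$M$, I obtain a $C^1$-family $(X_t(a))$ of $(g(T_2-t))$-Brownian motions with $X_t(0)=X_t$. Differentiating this SDE in~$a$ at $a=0$, the structure equations produce the curvature term while the Riemann-horizontality correction produces the $\partial_t g$ term, so that the derivative process $X_t':=\partial_a|_{a=0}X_t(a)$ solves precisely the damped parallel transport equation~\eqref{dampedbrownian}; together with the initial condition $X_0'=v$ this forces $X_t'=\Theta_{0,t}v$. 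This identification, carried out exactly as in~\cite{arnaudonthalmaier}, is the main obstacle, since it requires matching the variation of the time-dependent Brownian flow with the defining covariant equation of~$\Theta$ (including the verification that the extra $\partial_t g$-contribution is absorbed correctly).

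Finally I would push the variation forward and invoke Corollary~\ref{dermart}. By the first step each $\X_t(a):=u(T_2-t,X_t(a))$ is a $(\nablaa(T_2-t))$-martingale, so $(\X_t(a))$ is a $C^1$-family of martingales on~$N$ with derivative process
\[
\X_t'=\left.\frac{\partial}{\partial a}\right|_{a=0}u(T_2-t,X_t(a))=du(T_2-t,X_t)\,X_t'=du(T_2-t,X_t)\,\Theta_{0,t}v.
\]
Applying Corollary~\ref{dermart} on~$N$, with the damped parallel transport $\tilde\Theta$ along~$\X$, yields that $\tilde\Theta_{0,t}^{-1}\X_t'=\tilde\Theta_{0,t}^{-1}\,du(T_2-t,X_t)\,\Theta_{0,t}v$ is a $T_{\X_0}N$-valued local martingale, which is exactly the assertion.
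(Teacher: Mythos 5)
Your overall architecture is the same as the paper's: realize $\Theta_{0,t}v$ as the derivative at $s=0$ of a $C^1$-family of $(g(T_2-t))$-Brownian motions through $X$, observe via the time-reversed form of Corollary~\ref{btm} that each image process $u(T_2-t,X_t(s))$ is a $(\nablaa(T_2-t))$-martingale, and conclude with Corollary~\ref{dermart}. Your Steps 1 and 3 are correct and match the paper. The problem is Step 2, which you yourself flag as the main obstacle: it is not merely delicate, it is false as you set it up. If you solve the Riemann-horizontal lift SDE with the \emph{fixed} driving noise $W$ and vary only the initial frame, the derivative of the resulting flow is \emph{not} $\Theta_{0,t}v$. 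Linearizing the canonical horizontal SDE on the frame bundle, the variation vector acquires a vertical component $a_t := \omega_t(w_t)$; although $a_0=0$ for a horizontal initial variation, the structure equations force $a_t\neq 0$ as soon as curvature is present, and this vertical part injects a genuine martingale increment (schematically $U_t a_t\,dW_t$) into the derivative process $X_t'$. Thus $X_t'$ satisfies the damped-parallel-transport equation~\eqref{dampedbrownian} only modulo a nontrivial local-martingale term: one has $d\bigl(\Theta_{0,t}^{-1}X_t'\bigr) \stackrel{\m}{=} 0$ with $\Theta_{0,0}^{-1}X_0'=v$, but not $\Theta_{0,t}^{-1}X_t' \equiv v$. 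This is exactly what Theorem~\ref{compmart} and Corollary~\ref{dermart} predict for the derivative of a family of martingales, and it is consistent rather than in tension with your computation; the pathwise identity $X_t'=\Theta_{0,t}v$ is strictly stronger and fails for the fixed-noise flow in any curved situation.

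Consequently, applying Corollary~\ref{dermart} to your family only yields that $\tilde\Theta_{0,t}^{-1}\,du(T_2-t,X_t)\,X_t'$ is a local martingale for \emph{your} $X_t'$, which differs from $\Theta_{0,t}v$ by a random martingale part, so the Proposition does not follow. What the paper uses at this point is \cite[Theorem~3.1]{act2} (horizontal diffusion in $C^1$ path space): a genuinely different construction in which the driving noises of the Brownian motions $X_\cdot(s)$ for different $s$ are coupled through parallel transport between the paths rather than taken equal, and which produces a family with $\partial_s|_{s=0}X_t(s)=\Theta_{0,t}v$ pathwise (an alternative route would be to filter out the redundant noise in the Elworthy--Yor manner, but that is not what you wrote). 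Your attribution of the identification to \cite{arnaudonthalmaier} is also misplaced: that paper supplies the characterizations behind Theorems~\ref{deriv} and~\ref{compmart}, i.e.\ precisely the statements implying that $\Theta^{-1}X'$ is a local martingale, not the constancy you need.
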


\begin{proof}
  Let $\gamma: \R \to M$ be a smooth curve with $\gamma(0) = x$ and
  $\dot \gamma(0) = v$. 
  By \cite[Theorem~3.1]{act2} there exists a smooth family
  $(X_t(s))_{0 \leq t \leq T_2-T_1,\, s \in \R}$ of $M$-valued
  $(g(T_2-t))_{0 \leq t \leq T_2-T_1}$-Brownian motions satisfying
  $X_0(s) = \gamma(s)$ for all $s \in \R$, $X_t(0) = X_t$ for all $t
  \in [0,T_2-T_1]$ and
  \[
  \left. \frac{\partial}{\partial s} \right|_{s=0} X_t(s) =
  \Theta_{0,t} v.
  \]
  Let
  \[
  \X_t(s) := u(T_2-t, X_t(s)).
  \]
  By Corollary~\ref{btm} the process $(\X_t(s))_{0 \leq t \leq
    T_2-T_1}$ is an $N$-valued $(\tilde \nabla(t))_{0 \leq t \leq
    T_2-T_1}$-martingale for each $s \in \R$. Moreover,
  \[
  \left. \frac{\partial}{\partial s} \right|_{s=0} \X_t(s) = du(T_2-t,
  X_t) \left. \frac{\partial}{\partial s} \right|_{s=0} X_t(s) =
  du(T_2-t, X_t) \Theta_{0,t} v.
  \]
  Therefore the result follows immediately from
  Corollary~\ref{dermart}.
\end{proof}

\begin{remark}\label{repform}
  If the local martingale in Proposition~\ref{martheat} is a true
  martingale, we obtain the stochastic representation formula
  \begin{equation}\label{repr}
    du(T_2, x) = E \left[ \tilde \Theta_{0,T_2-T_1}^{-1} du(T_1, X_{T_2-T_1}) \Theta_{0,T_2-T_1}^{\mathstrut} \right].
  \end{equation}
\end{remark}

\begin{theorem}\label{dampedest}
  Let $M$ be a connected differentiable manifold equipped with a
  smooth family $(g(t))_{-\infty < t \leq T}$ of Riemannian metrics
  satisfying
  \begin{equation}\label{superricci}
    \frac{\partial g}{\partial t} + \Ric_{g(t)} \geq K > 0
  \end{equation}
  (uniformly strict super Ricci flow), and let $(N, \tilde g)$ be a
  Riemannian manifold of nonpositive sectional curvature.  Then every
  ancient solution $u: [-\infty, T] \times M \to N$ of the non-linear
  heat equation
  \[
  \frac{\partial u}{\partial t} = \frac{1}{2} \Delta^{g(t), \tilde g}
  u
  \]
  whose differential is bounded is constant.
\end{theorem}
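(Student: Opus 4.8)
The plan is to exploit the stochastic representation underlying Proposition~\ref{martheat} and Remark~\ref{repform}, letting the lower time endpoint tend to $-\infty$ and playing off two opposite monotonicity properties of the damped parallel transports on $M$ and on $N$. Fix $T_2\le T$, a point $x\in M$ and a vector $v\in T_xM$. For each $T_1<T_2$ run the $(g(T_2-t))_{0\le t\le T_2-T_1}$-Brownian motion $X$ started at $x$, together with the damped parallel transport $\Theta_{0,t}$ along $X$ and the damped parallel transport $\tilde\Theta_{0,t}$ along $\X_t=u(T_2-t,X_t)$ (which is an $N$-valued martingale by Corollary~\ref{btm}). By Proposition~\ref{martheat} the $T_{u(T_2,x)}N$-valued process
\[
M_t:=\tilde\Theta_{0,t}^{-1}\,du(T_2-t,X_t)\,\Theta_{0,t}\,v,\qquad 0\le t\le T_2-T_1,
\]
is a local martingale, and $M_0=du(T_2,x)\,v$ since $\Theta_{0,0}=\tilde\Theta_{0,0}=\Id$.

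The two key estimates are the following. On the source, I specialize the covariant equation~\eqref{dampedbrownian} to the time-reversed family $s\mapsto g(T_2-s)$ and use Remark~\ref{relpar} to replace the connection transport $\parr$ by the isometric Riemann transport $\parr^{\Riem}$. A direct computation then shows that $R_t:=(\parr_{0,t}^{\Riem})^{-1}\Theta_{0,t}$ is of finite variation with $\dot R_t=-\tfrac12 S_t R_t$, where $S_t:=(\parr_{0,t}^{\Riem})^{-1}\bigl(\tfrac{\partial g}{\partial s}+\Ric_{g(s)}\bigr)^{\#}\big|_{s=T_2-t}\,\parr_{0,t}^{\Riem}$ is $g(T_2)$-self-adjoint on $T_xM$. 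Since $\parr^{\Riem}$ is a $g$-isometry, condition~\eqref{superricci} gives $S_t\ge K$, whence
\[
\frac{d}{dt}\,|\Theta_{0,t}v|^2_{g(T_2-t)}=\frac{d}{dt}\,|R_tv|^2_{g(T_2)}=-\langle R_tv,S_tR_tv\rangle_{g(T_2)}\le-K\,|R_tv|^2_{g(T_2)},
\]
so that $|\Theta_{0,t}v|_{g(T_2-t)}\le e^{-Kt/2}\,|v|_{g(T_2)}$. On the target the metric $\tilde g$ is fixed, so $\tildeparr$ is an isometry, $\tildeparr_{0,t}^{-1}\tilde\Theta_{0,t}$ is of finite variation, and $d\,|\tilde\Theta_{0,t}\eta|^2_{\tilde g}=-\langle R^{\tilde g}(\tilde\Theta_{0,t}\eta,d\X_t)\,d\X_t,\tilde\Theta_{0,t}\eta\rangle_{\tilde g}$. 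Because the bilinear form $(w_1,w_2)\mapsto\langle R^{\tilde g}(\xi,w_1)w_2,\xi\rangle_{\tilde g}$ is negative semidefinite when $N$ has nonpositive sectional curvature, while $d[\X,\X]_t$ is positive semidefinite, the right-hand side is $\ge 0$; thus $|\tilde\Theta_{0,t}\eta|_{\tilde g}$ is nondecreasing and hence $|\tilde\Theta_{0,t}^{-1}w|_{\tilde g}\le|w|_{\tilde g}$ for all $w$.

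Combining both estimates with the assumed bound $|du(s,\cdot)w|_{\tilde g}\le C\,|w|_{g(s)}$ on the differential yields the uniform bound
\[
|M_t|_{\tilde g}\le|du(T_2-t,X_t)\Theta_{0,t}v|_{\tilde g}\le C\,|\Theta_{0,t}v|_{g(T_2-t)}\le C\,e^{-Kt/2}|v|_{g(T_2)}\le C\,|v|_{g(T_2)}.
\]
Hence $M$ is a bounded local martingale, therefore a true martingale, so $M_0=E[M_{T_2-T_1}]$ (this is exactly formula~\eqref{repr}). Taking $\tilde g$-norms and using the endpoint bound gives
\[
|du(T_2,x)v|_{\tilde g}=\bigl|E\bigl[M_{T_2-T_1}\bigr]\bigr|_{\tilde g}\le E\bigl[|M_{T_2-T_1}|_{\tilde g}\bigr]\le C\,e^{-K(T_2-T_1)/2}\,|v|_{g(T_2)}.
\]
Letting $T_1\to-\infty$ forces $du(T_2,x)v=0$; as $T_2\le T$, $x$ and $v$ are arbitrary and $M$ is connected, $u$ is constant.

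The main obstacle is the source estimate: one must track the sign of the drift of the damped parallel transport through the time reversal $s=T_2-t$ so that~\eqref{superricci} produces genuine exponential contraction rather than expansion, and correctly pass from the connection transport $\parr$ in~\eqref{dampedbrownian} to the isometric transport $\parr^{\Riem}$ (via Remark~\ref{relpar}) in which the norm is computed. A secondary point is to ensure that the $(g(T_2-t))$-Brownian motion $X$ does not explode before time $T_2-T_1$, so that $M_{T_2-T_1}$ is defined; this should follow from the uniform lower Ricci-type bound implicit in~\eqref{superricci}.
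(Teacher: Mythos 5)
Your proof is correct and takes essentially the same route as the paper's: the contraction $\|\Theta_{0,t}\|\le e^{-Kt/2}$ from the super Ricci flow condition, the bound $\|\tilde\Theta_{0,t}^{-1}\|\le 1$ from nonpositive sectional curvature of $N$, boundedness (hence true martingale property) of the local martingale of Proposition~\ref{martheat}, and the representation formula~\eqref{repr} with $T_1\to-\infty$. You merely spell out what the paper asserts without detail, namely the time-reversal computation behind the two transport estimates and the non-explosion caveat.
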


\begin{proof}[Proof of Theorem~\rm\ref{dampedest}]
  The curvature conditions imply that $\|\Theta_{0,s}\| \leq
  e^{-K_1s/2}$ and $\|\tilde \Theta_{0,s}^{-1}\| \leq 1$, so that the
  local martingale in Proposition~\ref{martheat} is bounded and hence
  a true martingale. The representation formula \eqref{repr} then
  implies that
  \[
  \|du(t, x)\|_{\tilde g} \leq e^{-Ks/2} \sup_{y \in M} \|du(t-s,
  y)\|.
  \]
  The claim now follows from letting $s \to \infty$.
\end{proof}

\begin{remark}
  More refined representation formulas and Liouville theorems for the
  nonlinear heat equation in the spirit of \cite{thalmaierwang} will
  be derived in a subsequent paper.
\end{remark}

\begin{thebibliography}{99}

\bibitem{ACT} M.~Arnaudon, K.~A.~Coulibaly, A.~Thalmaier, Brownian
  motion with respect to a metric depending on time: definition,
  existence and applications to Ricci flow.  \textit{C.~R.\ Acad.\
    Sci.\ Paris, Ser.~I} \textbf{346} (2008), 773--778.

\bibitem{act2} M.~Arnaudon, K.~A.~Coulibaly, A.~Thalmaier, Horizontal
  diffusion in $C^1$ path space, in: C.~Donati-Martin, A.~Lejay,
  A.~Rouault (Eds.), \textit{S\'eminaire de Probabilit\'es XLIII}
  (Springer-Verlag, Berlin, Heidelberg, 2011), 73--94.

\bibitem{arnaudonthalmaier} M.~Arnaudon, A.~Thalmaier, Complete lifts
  of connections and stochastic Jacobi fields. \textit{J.~Math. Pures
    Appl.} \textbf{77} (1998), 283--315.


\bibitem {Coulibaly} K.~A.~Coulibaly-Pasquier, Brownian motion with
  respect to time-changing Riemannian metrics, applications to Ricci
  flow. \textit{Ann.\ Inst.\ H.~Poincar\'e, Probab. Stat.} \textbf{47}
  (2011), 515--538.


\bibitem{emery} M.~\'Emery, Martingales continues dans les
  vari\'et\'es diff\'erentiables, in: P.~Bernard (Ed.),
  \textit{\'Ecole d'\'Et\'e de Probabilit\'es de Saint-Flour
    XXVIII-1998} (Springer-Verlag, Berlin, Heidelberg, 2000), 1--84.


\bibitem{hackenbroch} W.~Hackenbroch, A.~Thalmaier,
  \textit{Stochastische Analysis}. B.~G.~Teubner, Stuttgart, 1994.


\bibitem{hsu} E.~P.~Hsu, \textit{Stochastic Analysis on
    Manifolds}. American Mathematical Society, Providence, RI, 2002.




\bibitem{kuwadaphilipowski} K.~Kuwada, R.~Philipowski, Non-explosion
  of diffusion processes on manifolds with time-dependent
  metric. \textit{Math.~Z.} \textbf{268} (2011), 979--991.

\bibitem{kuwadaphilipowski2} K.~Kuwada, R.~Philipowski, Coupling of
  Brownian motions and Perelman's
  $\mathcal{L}$-functional. \textit{J.~Funct.\ Anal.} \textbf{260}
  (2011), 2742--2766.


\bibitem{paeng} S.-H.~Paeng, Brownian motion on manifolds with
  time-dependent metrics and stochastic
  completeness. \textit{J.~Geom.\ Phys.} \textbf{61} (2011),
  940--946. (Erratum in \textit{J.~Geom.\ Phys.} \textbf{61} (2011),
  2417--2418.)


\bibitem{perelman1} G.~Perelman, The entropy formula for the Ricci
  flow and its geometric applications. Preprint (2002),
  arXiv:math/0211159v1.

\bibitem{perelman2} G.~Perelman, Ricci flow with surgery on
  three-manifolds. Preprint (2003), arXiv:math/ 0303109v1.

\bibitem{perelman3} G.~Perelman, Finite extinction time for the
  solutions to the Ricci flow on certain three-manifolds. Preprint
  (2003), arXiv:math/0307245v1.





\bibitem{thalmaierwang} A.~Thalmaier, F.-Y.~Wang, A stochastic
  approach to a priori estimates and Liouville theorems for harmonic
  maps. \textit{Bull.\ Sci.\ Math.} \textbf{135} (2011), 816--843.

\end{thebibliography}
\end{document}